\documentclass[11pt]{amsart}
\usepackage{amsmath,amssymb,latexsym,soul,cite,mathrsfs}

\usepackage{color,enumitem,graphicx}
\usepackage[colorlinks=true,urlcolor=blue,
citecolor=red,linkcolor=blue,linktocpage,pdfpagelabels,
bookmarksnumbered,bookmarksopen]{hyperref}
\usepackage[english]{babel}

\usepackage[left=2.9cm,right=2.9cm,top=2.8cm,bottom=2.8cm]{geometry}
\usepackage[hyperpageref]{backref}
\usepackage{ dsfont }
\usepackage[colorinlistoftodos]{todonotes}
\makeatletter
\providecommand\@dotsep{5}
\def\listtodoname{List of Todos}
\def\listoftodos{\@starttoc{tdo}\listtodoname}
\makeatother

\numberwithin{equation}{section}
\def\dis{\displaystyle}

\def\R {{\rm I}\hskip -0.85mm{\rm R}}
\def\N {{\rm I}\hskip -0.85mm{\rm N}}

\def\l{\lambda}
\def\m{\mu}

\def\O{\Omega}

\def\ov{\overline}

\def\dis{\displaystyle}

\newtheorem{theorem}{Theorem}[section]
\newtheorem{proposition}[theorem]{Proposition}
\newtheorem{lemma}[theorem]{Lemma}

\newtheorem{remark}{Remark}

\title[Study of a class of generalized Schr\"{o}dinger equations]
{Study of a class of generalized Schr\"{o}dinger equations}

\author[A. V. Santos]{Andrelino V. Santos}
\author[J. R. Santos Jr.]{Jo\~ao R. Santos J\'unior}
\author[A. Su\'arez]{Antonio Su\'arez}

\address[A. V. Santos]{\newline\indent Faculdade de Matem\'atica
\newline\indent 
Instituto de Ci\^{e}ncias Exatas e Naturais
\newline\indent 
Universidade Federal do Par\'a
\newline\indent
Avenida Augusto corr\^{e}a 01, 66075-110, Bel\'em, PA, Brazil}
\email{\href{mailto: andrellino77@gmail.com}{andrellino77@gmail.com}}

\address[J. R. Santos Jr.]{\newline\indent Faculdade de Matem\'atica
\newline\indent 
Instituto de Ci\^{e}ncias Exatas e Naturais
\newline\indent 
Universidade Federal do Par\'a
\newline\indent
Avenida Augusto corr\^{e}a 01, 66075-110, Bel\'em, PA, Brazil}
\email{\href{mailto: joaojunior@ufpa.br }{joaojunior@ufpa.br}}

\address[A. Su\'arez]{\newline\indent Fac. de Matem{\'a}ticas
\newline\indent 
Dpto. de Ecuaciones Diferenciales y An{\'a}lisis Num{\'e}rico
\newline\indent 
Univ. de Sevilla
\newline\indent
C/. Tarfia s/n, 41012, Sevilla, Spain.}
\email{\href{mailto: suarez@us.es}{suarez@us.es}}

\thanks{Jo\~ao R. Santos J\'unior was partially supported by CNPq-Proc. 302698/2015-9 and CAPES-Proc. 88881.120045/2016-01, Brazil. Antonio Su\'arez was partially supported by MTM2015-69875-P (MINECO/ FE\-DER, UE)}
\subjclass[2000]{ 35J10, 35J25, 35J60.}
\keywords{Generalized Schr\"{o}dinger problems, existence of solutions, variational methods, sub-supersolution method, bifurcation method}


\pretolerance10000

\begin{document}

\maketitle
\begin{abstract}
A class of generalized Schr\"{o}dinger problems in bounded domain is studied. A complete overview of the set of solutions is provided, depending on the values assumed by parameters involved in the problem. In order to obtain the results, we combine monotony, bifurcation and variational methods.   

\end{abstract}
\maketitle

\section{Introduction}


In this work we investigate general conditions for which the stationary generalized Schr\"{o}dinger problem
\begin{equation}\label{P}\tag{$P_{\lambda, q}$}
\left \{ \begin{array}{ll}
-div( \vartheta(u)\nabla u)+\frac{1}{2}\vartheta'(u)|\nabla u|^{2}= \lambda |u|^{q-1}u & \mbox{in $\Omega$,}\\
u=0 & \mbox{on $\partial\Omega$,}
\end{array}\right.
\end{equation}
has nontrivial solutions, where $\Omega\subset\R^{N}$, $N\geq 3$, is a bounded smooth domain, $q>0$, $\lambda$ is a real parameter and $\vartheta:\R\to[1,\infty)$ is an even $C^{1}$-function satisfying some suitable hypotheses which will be stated later on.

\medskip

Choosing $\vartheta(s)=1+(l(s^{2})')^{2}/2$, for some $C^{2}$-function $l$, the problem \eqref{P} becomes 
\begin{equation}\label{schro}
\left \{ \begin{array}{ll}
-\Delta u-\Delta(l(u^{2}))l'(u^{2})u= \lambda |u|^{q-1}u & \mbox{in $\Omega$,}\\
u=0 & \mbox{on $\partial\Omega$.}
\end{array}\right.
\end{equation}
When $\Omega=\R^{N}$, \eqref{schro} is related to the existence of solitary wave solutions for the parabolic quasilinear Schr\"{o}dinger equation
\begin{equation}\label{evolution}
i\partial_{t}z=-\Delta z+V(x)z-\rho(|z|^{2})z-\Delta(l(|z|^{2}))l'(|z|^{2})z, \ x\in\R^{N},
\end{equation}
where $z:\R\times\R^{N}\to\mathds{C}$, $V:\R^{N}\to\R$ is a given potential and $l, \rho$ are real functions. Equation \eqref{evolution} appears naturally as a model for several physical phenomena, depending on the type of function $l$ considered. In fact, if $l(s)=s$, \eqref{evolution} describes the behavior of a superfluid film in plasma physics, see \cite{Kur}. For $l(s)=(1+s)^{1/2}$, \eqref{evolution} models the self-channeling of a high-power ultrashort laser in matter, see \cite{BG, BMMLB, CS, LSS}. Furthermore, \eqref{evolution} also appears in plasma physics and fluid mechanics \cite{LS}, in dissipative quantum mechanics \cite{Has}, in the theory of Heisenberg ferromagnetism and magnons \cite{QC} and in condensed matter theory \cite{MF}. 

In the last years, many authors have studied stationary Schr\"{o}dinger problems like \eqref{schro}, when $l(s)=s$ and $\Omega=\R^{N}$. In our best knowledge, the first result is due to \cite{PSW} which, by using a constrained minimization argument, proved the existence of nonnegative solutions for $\lambda>0$ large enough and $q\in (1, (N+2)/(N-2))$. Afterwards, a general existence result was derived \cite{LWW}. In \cite{LWW} the authors make a change of variable and reduce the quasilinear problem to a semilinear one and an Orlicz space framework was used to prove the existence of a positive solution via Mountain pass theorem. The same method of changing of variables was also used in
\cite{CJ}, but a framework involving usual Sobolev spaces was considered to treat the problem. More recent references can be found in \cite{DMS, DMO, DPY, SV, YWA}. In the case $l(s)=(1+s)^{1/2}$ fewer results are known, we refer the reader to \cite{DHS, ShW1, ShW2}.

In the present paper we are interested in investigating general conditions on $\vartheta$, in order to ensure the existence of nontrivial solutions for the problem \eqref{P}. The assumptions we are going to consider on the function $\vartheta$ are the following:
\begin{enumerate}
\item[$(\vartheta_{1})$] $s\mapsto \vartheta(s)$ is decreasing in $(-\infty, 0)$ and increasing in $(0, \infty)$;
\item[$(\vartheta_{2})$] $s\mapsto \vartheta(s)/s^{2}$ nondecreasing in $(-\infty, 0)$ and nonincreasing in $(0, \infty)$;
\item[$(\vartheta_{3})$] $\lim_{|s|\to \infty}\vartheta(s)/s^{2}=\alpha^{2}/2$, for some $\alpha>0$.
\end{enumerate}

Some simple examples of functions satisfying $(\vartheta_{1})-(\vartheta_{3})$ are: 
$$
\vartheta_{1}(s)=1+s^{2}, \ \vartheta_{2}(s)=(1+|s|^{p})^{1/p}+s^{2} \ \mbox{with $p\in [1, 2]$ and} \ \vartheta_{3}(s)=1+\ln(1+e^{s^{2}}).
$$

In this way, the present paper provides an unified approach to treat simultaneously a wide range of problems, among them some very relevant problems in terms of applications, which has been attacked separately in the literature. Other examples are given by:
$$
\vartheta_{4}(s)=1+\ln(e^{s\arctan s}+e^{s^{2}+s\arctan s}) \ \mbox{and} \ \vartheta_{5}(s)=1+\ln((1+|s|)^{|s|}(1+e^{s^{2}})).
$$

Under the stated assumptions, by consider different values in $\lambda$ and $q>0$, we provide a complete overview about the set of solutions of \eqref{P}. In our best knowledge, these results are new in this context, and some of them extend those obtained in \cite{FSS} to different general classes of Schr\"{o}dinger problems.

Our main results are as follows:

\medskip

\begin{theorem}\label{teor1}
Suppose $\vartheta$ satisfies $(\vartheta_{1})-(\vartheta_{3})$ and $q\in (0, 1)$. The following claims hold:
\begin{enumerate}
\item[$(a)$] \eqref{P2} has a unique positive solution $u_{\lambda}$ if, and only if, $\lambda>0$. Moreover, 
$$
\lim_{\lambda\to 0}|u_{\lambda}|_{\infty}=0 \ \mbox{and} \ \lim_{\lambda\to\infty}|u_{\lambda}|_{\infty}=\infty.
$$

\item[$(b)$] If $\lambda>0$, then \eqref{P2} has a sequence $\{u_{k}\}$ of sign-changing solutions such that
$$
\lim_{k\to\infty}\|u_{k}\|_{H_{0}^{1}(\Omega)}=0.
$$
\end{enumerate}
\end{theorem}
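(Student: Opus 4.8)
The plan is to pass, via the change of variables $v=G(u)$ with $G(t)=\int_0^t\sqrt{\vartheta(\tau)}\,d\tau$, from \eqref{P2} to an equivalent semilinear Dirichlet problem of sublinear type, and then to prove $(a)$ by the sub-supersolution method together with a uniqueness argument of Brezis--Oswald type, and $(b)$ by Clark's symmetric critical point theorem. Since $\vartheta\ge1$ is even and of class $C^1$, $G$ is an odd diffeomorphism of $\R$ with $G'\ge1$, and a direct computation turns $-\mathrm{div}(\vartheta(u)\nabla u)+\tfrac12\vartheta'(u)|\nabla u|^2$ into $-\sqrt{\vartheta(u)}\,\Delta v$; hence \eqref{P2} is equivalent to $-\Delta v=\lambda f(v)$ in $\Omega$, $v=0$ on $\partial\Omega$, where $f(s)=|G^{-1}(s)|^{q-1}G^{-1}(s)/\sqrt{\vartheta(G^{-1}(s))}$. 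The first task is to extract from $(\vartheta_1)$--$(\vartheta_3)$ and $q\in(0,1)$ the structural features of $f$: it is odd, continuous and bounded on $\R$, positive on $(0,\infty)$, with $f(s)\sim c_0 s^q$ as $s\to0^+$ and $f(s)\to0$ as $s\to\infty$; the map $s\mapsto f(s)/s$ is strictly decreasing on $(0,\infty)$ --- using $G(t)\le t\sqrt{\vartheta(t)}$ and $\vartheta'\ge0$ on $(0,\infty)$, which yield $\big(\log(f(G(t))/G(t))\big)'\le(q-1)/t<0$ --- with limits $+\infty$ at $0^+$ and $0$ at $+\infty$; and $|G^{-1}(s)|\le|s|$, $|G^{-1}(s)|\le C(1+|s|^{1/2})$, $|(G^{-1})'|\le1$, so $u=G^{-1}(v)$ is a bijection between the two solution sets with $\|u\|_{H_0^1}\le\|v\|_{H_0^1}$.

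For part $(a)$: testing the reduced equation with $v$ shows that any solution vanishes when $\lambda\le0$, so a positive solution can exist only if $\lambda>0$. Fixing $\lambda>0$, let $\varphi_1>0$ be the first Dirichlet eigenfunction of $-\Delta$ with eigenvalue $\lambda_1$, and let $e>0$ solve $-\Delta e=1$, $e|_{\partial\Omega}=0$; using $f(s)\gtrsim s^q$ near $0$ and $f\le\sup f=:K$, one checks that $\underline v=\varepsilon\varphi_1$ is a subsolution for small $\varepsilon$ and $\overline v=\lambda K e$ a supersolution, ordered for small $\varepsilon$, which produces a solution $v_\lambda$ with $0<\varepsilon\varphi_1\le v_\lambda\le\lambda K e$. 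Uniqueness of the positive solution follows from the strict monotonicity of $f(s)/s$ through Picone's identity (the Brezis--Oswald argument, testing with $(v_1^2-v_2^2)/v_1$ and $(v_2^2-v_1^2)/v_2$), and a further sub-supersolution comparison together with uniqueness yields that $\lambda\mapsto v_\lambda$ is nondecreasing. Finally, $0<v_\lambda\le\lambda K e$ gives $|v_\lambda|_\infty\to0$, hence $|u_\lambda|_\infty=G^{-1}(|v_\lambda|_\infty)\to0$, as $\lambda\to0$; and if $\sup_\Omega v_\lambda$ remained bounded as $\lambda\to\infty$, testing with $\varphi_1$ would force $\lambda\int_\Omega f(v_\lambda)\varphi_1=\lambda_1\int_\Omega v_\lambda\varphi_1$ to be simultaneously bounded and, by monotonicity together with Fatou's lemma (using $v_\lambda\ge\varepsilon\varphi_1>0$), unbounded --- a contradiction; hence $|u_\lambda|_\infty=G^{-1}(|v_\lambda|_\infty)\to\infty$.

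For part $(b)$: I would use the even $C^1$ functional $J_\lambda(v)=\tfrac12\|\nabla v\|_2^2-\tfrac{\lambda}{q+1}\int_\Omega|G^{-1}(v)|^{q+1}$ on $H_0^1(\Omega)$, whose critical points are exactly the solutions of the reduced problem. Since $|G^{-1}(s)|^{q+1}\le C(1+|s|^{(q+1)/2})$ with $(q+1)/2<1$, $J_\lambda$ is bounded below, coercive, and satisfies the Palais--Smale condition at every level (the lower-order term is weakly continuous with compact gradient, as $f$ is bounded), and it is even with $J_\lambda(0)=0$ because $\vartheta$ is even. For each $k\in\N$ choose a $k$-dimensional subspace $E_k\subset H_0^1(\Omega)$; on $E_k$ all norms are equivalent and $|G^{-1}(s)|\ge c_0|s|$ for $|s|$ small, so $J_\lambda(v)\le\tfrac12 r^2-c_k r^{q+1}<0$ as soon as $\|v\|_{H_0^1}=r$ is small, and therefore $A_k=E_k\cap\{\|v\|_{H_0^1}=r_k\}$ is a symmetric set of genus $k$ with $\sup_{A_k}J_\lambda<0$. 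Clark's theorem, in the version due to Kajikiya, then provides a sequence of critical points $v_k\ne0$ with $J_\lambda(v_k)<0$ and $v_k\to0$ in $H_0^1(\Omega)$; setting $u_k=G^{-1}(v_k)$ gives solutions of \eqref{P2} with $\|u_k\|_{H_0^1}\le\|v_k\|_{H_0^1}\to0$. That these change sign follows from part $(a)$: by uniqueness and the strong maximum principle, the only solutions of the reduced problem that do not change sign are $0$ and $\pm v_\lambda$, whose $H_0^1$-norms equal $0$ and a fixed $\nu>0$; hence for $k$ large $0<\|v_k\|_{H_0^1}<\nu$, so $v_k$ --- and $u_k$, since $G^{-1}$ is odd and strictly increasing --- must change sign.

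The step I expect to be the main obstacle is the first one: verifying that the transformed nonlinearity $f$ (equivalently, the inverse $G^{-1}$) inherits from $(\vartheta_1)$--$(\vartheta_3)$ exactly the properties invoked afterwards --- above all the strict monotonicity of $s\mapsto f(s)/s$ on $(0,\infty)$, which underpins both the uniqueness in $(a)$ and the classification of non-sign-changing solutions in $(b)$, together with the growth bound $|G^{-1}(s)|^{q+1}\lesssim 1+|s|^{(q+1)/2}$ making $J_\lambda$ well defined, coercive and bounded below. Once these elementary but somewhat delicate properties of $G$ are settled, what remains is a fairly standard assembly of the sub-supersolution method, the Brezis--Oswald argument and Clark's theorem.
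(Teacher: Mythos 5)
Your proposal is correct and follows essentially the same route as the paper: the same dual change of variables ($v=G(u)$, i.e.\ $u=f(v)$ with the paper's $f=G^{-1}$), the sub/supersolution pair $\varepsilon\varphi_1$ and a multiple of $e$ with Brezis--Oswald uniqueness and the induced $L^\infty$ asymptotics for part $(a)$, and Clark/Kajikiya applied to the even, coercive dual functional on small spheres of finite-dimensional subspaces for part $(b)$; your explicit sign-changing argument (absent but implicit in the paper) and your test-function contradiction for $\lambda\to\infty$ (the paper instead lets the subsolution parameter $\varepsilon(\lambda)\to\infty$) are only cosmetic variants. The one slip is that ``all norms are equivalent'' on an arbitrary $k$-dimensional subspace of $H_0^1(\Omega)$ does not cover the $L^\infty$ norm (such a subspace may contain unbounded functions), so to invoke $|G^{-1}(s)|\ge c_0|s|$ for small $|s|$ you should, as the paper does, take the subspace inside $H_0^1(\Omega)\cap L^\infty(\Omega)$ with basis functions bounded by $1$ and then scale the radius, or replace this step by a covering argument as in the paper's Lemma \ref{tec}.
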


\begin{theorem}\label{teor2}
Suppose $(\vartheta_{1})-(\vartheta_{2})$ hold and $q=1$. Then, the problem \eqref{P2} possesses a unique positive solution $u_{\lambda}$ if, and only if, $\lambda>\vartheta(0)\lambda_{1}$. Moreover, 
$$
\lim_{\lambda\to\vartheta(0)\lambda_{1}}|u_{\lambda}|_{\infty}=0 \ \mbox{and} \ \lim_{\lambda\to\infty}|u_{\lambda}|_{\infty}=\infty.
$$
\end{theorem}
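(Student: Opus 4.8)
The plan is to exploit the change of variables that linearizes the principal part of the operator, reducing $(P_{\lambda,1})$ to a semilinear problem to which classical sub-supersolution and bifurcation arguments apply. Concretely, since $\vartheta\in C^1$ is even, positive and bounded below by $1$, I would introduce the primitive $g(t)=\int_0^t\sqrt{\vartheta(s)}\,ds$, which is a $C^1$-diffeomorphism of $\R$ onto $\R$; setting $v=g(u)$, a direct computation shows that $u$ solves $(P_{\lambda,1})$ (in the appropriate weak sense) if and only if $v$ solves the semilinear Dirichlet problem
\begin{equation*}
-\Delta v = \lambda\,\frac{g^{-1}(v)}{\sqrt{\vartheta(g^{-1}(v))}}\quad\text{in }\Omega,\qquad v=0\text{ on }\partial\Omega.
\end{equation*}
Write $f(v):=g^{-1}(v)/\sqrt{\vartheta(g^{-1}(v))}$. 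The hypotheses $(\vartheta_1)$–$(\vartheta_2)$ translate into structural properties of $f$: oddness, $f'(0)=1/\vartheta(0)$ (so the linearization at $0$ has eigenvalue threshold $\lambda\vartheta(0)^{-1}$, matching the claimed bifurcation value $\vartheta(0)\lambda_1$), and — this is the crucial point — that $t\mapsto f(t)/t$ is strictly decreasing on $(0,\infty)$. The last property is exactly where $(\vartheta_2)$ enters: one checks that $f(t)/t$ decreasing is equivalent to the map $s\mapsto \vartheta(s)/s^2$ being monotone as assumed, together with the monotonicity of $\vartheta$ itself controlling $g^{-1}$.

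With $f$ sublinear in this strong sense, the reduced problem is a classical logistic-type problem. For existence when $\lambda>\vartheta(0)\lambda_1$ I would build an ordered pair of sub- and supersolutions: a small positive subsolution of the form $\e\varphi_1$ (where $\varphi_1>0$ is the first eigenfunction of $-\Delta$ with eigenvalue $\lambda_1$), which works precisely because $\lambda f'(0)=\lambda/\vartheta(0)>\lambda_1$; and an arbitrarily large constant supersolution $M$, which works because $f(t)/t\to 0$ as $t\to\infty$ — this limit follows from $(\vartheta_3)$ is \emph{not} assumed here, so instead I would use that $f(t)/t$ is decreasing and bounded, hence has a limit, and argue the supersolution exists for $M$ large using only that $\lambda f(M)\le \lambda f'(0^+)M$ is eventually dominated; if a constant supersolution fails one falls back on $A\varphi$ for a suitable torsion-type function. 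Uniqueness of the positive solution is then the standard consequence of strict sublinearity of $f(t)/t$ via Brezis–Oswald: if $v_1,v_2$ are two positive solutions, testing the equations for $v_1$ against $v_2$ and vice versa and subtracting yields $\int_\Omega (f(v_1)/v_1 - f(v_2)/v_2)(v_1^2 - v_2^2)\ge 0$ — wait, more precisely one uses the Picone/Brezis–Oswald identity $\int_\Omega \big(\frac{f(v_1)}{v_1}-\frac{f(v_2)}{v_2}\big)(v_1^2-v_2^2)=0$, forcing $v_1\equiv v_2$. Nonexistence for $\lambda\le\vartheta(0)\lambda_1$ follows by testing the equation against $\varphi_1$: since $f(t)<f'(0)t=t/\vartheta(0)$ for $t>0$, one gets $\lambda_1\int v\varphi_1=\int \nabla v\nabla\varphi_1=\lambda\int f(v)\varphi_1<\frac{\lambda}{\vartheta(0)}\int v\varphi_1$, impossible unless $\lambda>\vartheta(0)\lambda_1$.

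Finally, for the asymptotics, $\lim_{\lambda\downarrow\vartheta(0)\lambda_1}|u_\lambda|_\infty=0$ and $\lim_{\lambda\to\infty}|u_\lambda|_\infty=\infty$: I would first transfer these to statements about $|v_\lambda|_\infty$ since $g$ and $g^{-1}$ are continuous and fix $0$. Monotonicity of $\lambda\mapsto v_\lambda$ (a consequence of uniqueness plus the sub-supersolution ordering) gives monotone limits. If $v_\lambda\to V\not\equiv 0$ as $\lambda\downarrow\vartheta(0)\lambda_1$, passing to the limit in the equation and using $f(t)\le f'(0)t$ with equality only at $t=0$ contradicts the eigenvalue characterization, so $V\equiv0$; elliptic estimates upgrade this to $|v_\lambda|_\infty\to0$. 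For $\lambda\to\infty$, if $|v_\lambda|_\infty$ stayed bounded then $\lambda f(v_\lambda)=-\Delta v_\lambda$ would stay bounded in, say, $L^\infty$, forcing $f(v_\lambda)\to0$ hence $v_\lambda\to0$, but then testing against $\varphi_1$ gives $\lambda_1\int v_\lambda\varphi_1=\lambda\int f(v_\lambda)\varphi_1\sim \lambda f'(0)\int v_\lambda\varphi_1\to\infty\cdot\int v_\lambda\varphi_1$, a contradiction once $\lambda f'(0)>\lambda_1$ and $v_\lambda\not\equiv0$; a short rescaling argument makes this rigorous. I expect the main obstacle to be the faithful translation of $(\vartheta_1)$–$(\vartheta_2)$ into the strict monotonicity of $f(t)/t$ and the verification that the change of variables is legitimate at the level of weak solutions (the term $\tfrac12\vartheta'(u)|\nabla u|^2$ must be shown to be exactly what makes $-\mathrm{div}(\vartheta(u)\nabla u)+\tfrac12\vartheta'(u)|\nabla u|^2$ the pullback of $-\Delta$ under $g$), together with the regularity needed to make the Brezis–Oswald identity valid for possibly merely $H_0^1\cap L^\infty$ solutions.
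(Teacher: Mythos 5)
Your route is essentially the paper's: pass to the dual problem \eqref{P2'} with nonlinearity $g(s)=f'(s)f(s)$, take $\underline v=\e\varphi_1$ as subsolution (using $g(s)/s\to 1/\vartheta(0)$ at the origin), a large supersolution, uniqueness from the strict monotonicity of $s\mapsto g(s)/s$ via Brezis--Oswald, nonexistence for $\lambda\le\vartheta(0)\lambda_1$ by comparing with the principal eigenvalue, and the asymptotics by letting $\e(\lambda)\to\infty$ resp.\ by compactness at the threshold; the legitimacy of the change of variables, which you flag as a worry, is exactly Proposition \ref{change}.

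There is, however, a genuine gap at the supersolution step, precisely at the point you yourself identified. A constant $M$ is never a supersolution here, since $-\Delta M=0<\lambda g(M)$; what is needed (and what the paper uses) is a torsion-type barrier $\overline v=Ke$, and the requirement $K\ge \lambda g(Ke(x))$ amounts to $\lambda e_M\,g(Ke_L)/(Ke_L)\le 1$ for some large $K$, i.e.\ to smallness of $g(s)/s$ at infinity relative to $\lambda$. Your proposed substitutes (the inequality $\lambda f(M)\le\lambda f'(0^+)M$, or ``falling back on a torsion-type function'') cannot supply this: under $(\vartheta_1)$--$(\vartheta_2)$ alone the monotone limit $L:=\lim_{s\to\infty}g(s)/s$ may be strictly positive --- take any bounded $\vartheta$, e.g.\ $\vartheta(s)=2-1/(1+s^{2})$, for which $f$ grows linearly --- and then every positive supersolution $w$ satisfies $-\Delta w\ge\lambda g(w)\ge\lambda L w$, forcing $\lambda\le\lambda_1/L$. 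So for large $\lambda$ there is no positive supersolution, and in fact no positive solution, and no choice of barrier can rescue the construction; the conclusion ``existence for every $\lambda>\vartheta(0)\lambda_1$'' genuinely requires the quadratic growth hypothesis $(\vartheta_3)$, through which $g(s)/s\to 0$ at infinity (Lemma \ref{main}$(iv)$). The paper's proof uses exactly this item, so $(\vartheta_3)$ is implicitly assumed even though the theorem's hypothesis line omits it; your proof plan must assume it as well rather than try to argue around it. Apart from this (and the garbled Brezis--Oswald identity, which should be the standard one with the strictly decreasing quotient $g(s)/s$), the remaining steps are correct and parallel the paper's.
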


\begin{theorem}\label{teor3}
Suppose $\vartheta$ satisfies $(\vartheta_{1})-(\vartheta_{3})$ and $q\in (1, 3)$. The following claims hold:
\begin{enumerate}
\item[$(a)$] there exists $\lambda_{\ast}>0$ such that:
\begin{enumerate} 
\item[$(i)$] \eqref{P2} has no positive solution if $\lambda\in (0,\lambda_{\ast})$;
\item[$(ii)$] \eqref{P2} has at least one positive solution if $\lambda=\lambda_{\ast}$;
\item[$(iii)$] \eqref{P2} has at least two ordered positive solutions $w_{\lambda}<v_{\lambda}$, if $\lambda\in (\lambda_{\ast}, \infty)$. Moreover, the map $\lambda\mapsto v_{\lambda}$ is increasing and 
$$
\lim_{\lambda\to\infty}|v_{\lambda}|_{\infty}=\infty.
$$
\end{enumerate}
\item[$(b)$] For each $k\in \N$ there exists $\lambda_{k}>0$ such that \eqref{P2} has at least $k$ pairs of nontrivial solutions with negative energy, whatever $\lambda>\lambda_{k}$.
\end{enumerate}
\end{theorem}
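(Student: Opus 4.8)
The plan is to work throughout with the semilinear reformulation \eqref{P2} obtained from the change of variables that linearizes the principal part: with $G(s)=\int_0^s\sqrt{\vartheta(t)}\,dt$ and $v=G(u)$, problem \eqref{P} becomes $-\Delta v=\lambda f(v)$ in $\Omega$, $v=0$ on $\partial\Omega$, where $f(v)=|G^{-1}(v)|^{q-1}G^{-1}(v)\,\vartheta(G^{-1}(v))^{-1/2}$ and $f$ is odd since $\vartheta$ is even. The first step is to extract from $(\vartheta_1)$--$(\vartheta_3)$ the qualitative shape of $f$: from $G'(0)=\sqrt{\vartheta(0)}$ one gets $f(v)\sim c_0|v|^{q-1}v$ as $v\to0$, so $f$ is superlinear at the origin because $q>1$; since $(\vartheta_2)$--$(\vartheta_3)$ force $\vartheta(s)\sim\alpha^2s^2/2$ and $G(s)\sim\alpha s^2/(2\sqrt2)$ at infinity, one gets $f(v)\sim c_\infty|v|^{(q-3)/2}v$ as $|v|\to\infty$, so $f$ is \emph{strictly sublinear} at infinity and $F(v):=\int_0^v f$ grows like $|v|^{(q+1)/2}$, that is \emph{subquadratically}, precisely because $q<3$; finally $(\vartheta_2)$ gives that $s\mapsto f(s)/s$ attains on $(0,\infty)$ a finite, strictly positive maximum $m_0$. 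Hence the energy $J_\lambda(v)=\tfrac12\int_\Omega|\nabla v|^2-\lambda\int_\Omega F(v)$ on $H_0^1(\Omega)$ is even, of class $C^1$, coercive, weakly lower semicontinuous, satisfies the Palais--Smale condition (the growth of $f$ is far below critical), has $v=0$ as a strict local minimum (because $q+1>2$), and is bounded below; this I would establish first.

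\textbf{Part $(a)$.} Testing \eqref{P2} with the first Dirichlet eigenfunction $\varphi_1$ gives $\lambda_1\int v\varphi_1\le\lambda m_0\int v\varphi_1$ for any positive solution $v$, so no positive solution exists for $\lambda<\lambda_1/m_0$; I would then set $\lambda_\ast:=\inf\{\lambda>0:\ \eqref{P2}\ \text{has a positive solution}\}\ge\lambda_1/m_0>0$ and show $\lambda_\ast<\infty$ by minimizing the truncated functional $v\mapsto\tfrac12\int|\nabla v|^2-\lambda\int F(v^+)$, whose global minimizer is nontrivial for $\lambda$ large (since $J_\lambda(t\varphi_1)<0$ for a suitable $t$) and positive by the maximum principle. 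That the set of admissible parameters is the interval $[\lambda_\ast,\infty)$, hence $(i)$, follows from the sub-supersolution method: a positive solution at $\lambda_0$ is a strict subsolution at every $\lambda>\lambda_0$, while $Me$ with $-\Delta e=1$ and $M$ large is a supersolution---the existence of such $M$ being exactly where the sublinear growth of $f$ at infinity enters. For $(ii)$ I would take $\lambda_n\downarrow\lambda_\ast$ with minimal positive solutions $w_{\lambda_n}$: these are uniformly bounded (coercivity and elliptic estimates up to $\partial\Omega$) and uniformly bounded \emph{away from $0$}, because if $w_{\lambda_n}=t_n\psi_n\to0$ with $\|\psi_n\|=1$ then $-\Delta\psi_n=\lambda_n f(t_n\psi_n)/t_n\to0$ (as $f(s)/s\to0$ when $s\to0$), contradicting $\|\psi_n\|=1$; so they converge to a positive solution at $\lambda_\ast$. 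For $(iii)$, for each $\lambda>\lambda_\ast$ I would construct the minimal positive solution $w_\lambda$ by monotone iteration and a second, larger solution $v_\lambda$ by a variational argument (the global minimizer of $J_\lambda$ when $\lambda$ is large, complemented near $\lambda_\ast$ by a bifurcation/topological-degree analysis at the turning point $\lambda_\ast$), the ordering $w_\lambda<v_\lambda$ following from minimality of $w_\lambda$ and the strong maximum principle once distinctness is known. Monotonicity of $\lambda\mapsto v_\lambda$ I would get by taking $v_\lambda$ to be the maximal solution lying below the $\lambda$-supersolution and using that $v_{\lambda_1}$ is a subsolution at any $\lambda_2>\lambda_1$; and $|v_\lambda|_\infty\to\infty$ follows by combining a uniform lower bound $v_\lambda\ge$ (a fixed positive subsolution) for $\lambda$ bounded away from $\lambda_\ast$ with the identity $\lambda_1\int v_\lambda\varphi_1=\lambda\int f(v_\lambda)\varphi_1$, which would force $v_\lambda\to0$ if $|v_\lambda|_\infty$ remained bounded as $\lambda\to\infty$.

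\textbf{Part $(b)$.} Since $J_\lambda$ is even, $C^1$, coercive, bounded below, satisfies Palais--Smale and $J_\lambda(0)=0$, I would invoke a symmetric-critical-point theorem of Clark type. Fix $k\in\N$ and let $E_k\subset H_0^1(\Omega)$ be spanned by the first $k$ Dirichlet eigenfunctions; using $F(v)\ge c|v|^{q+1}$ near $0$ and the equivalence of norms on the finite-dimensional space $E_k$, on the sphere $\|v\|=\rho$ of $E_k$ one has $J_\lambda(v)\le\tfrac12\rho^2-\lambda c_k\rho^{q+1}$. Choosing $\rho=\rho_k$ small enough that $|v|$ stays small on $E_k\cap\{\|v\|=\rho_k\}$ and then $\lambda>\lambda_k$ large makes $\sup_{E_k\cap\{\|v\|=\rho_k\}}J_\lambda<0$, and Clark's theorem yields at least $k$ pairs $\pm v$ of critical points of $J_\lambda$ with negative critical value, i.e.\ $k$ pairs of nontrivial solutions of \eqref{P2}---equivalently of \eqref{P}---with negative energy.

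\textbf{Main obstacle.} I expect the hard part to be the existence at the threshold $\lambda_\ast$ together with the production of a \emph{second, ordered} positive solution for \emph{every} $\lambda>\lambda_\ast$ (and not merely for large $\lambda$). This requires uniform a priori bounds for the relevant solution families (De Giorgi--Nash--Moser estimates and boundary regularity for \eqref{P2}), the non-collapse of positive solutions as $\lambda\downarrow\lambda_\ast$---which hinges on $f'(0)=0$, so that no bifurcation from the trivial branch occurs at a finite value of the parameter---and a careful matching of the monotone construction (which gives $w_\lambda$ and $v_\lambda$ for all $\lambda>\lambda_\ast$) with the variational/bifurcation construction (which guarantees $w_\lambda\ne v_\lambda$), so that one genuinely obtains two distinct ordered positive solutions rather than a single one.
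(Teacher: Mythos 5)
Your overall framework is the paper's: pass to the dual problem \eqref{P2'}, exploit that $g(s)/s\to 0$ both as $s\to 0$ (since $q>1$) and as $|s|\to\infty$ (since $q<3$), define $\lambda_{\ast}$ as the infimum of admissible parameters, get existence at $\lambda_{\ast}$ by a priori bounds, elliptic regularity and a non-collapse argument, and use a Clark-type symmetric theorem for part (b). Your treatment of (a)(i), (a)(ii) and (b) is essentially sound; in (b) your small-sphere estimate on a $k$-dimensional subspace, using $G(s)\geq c|s|^{q+1}$ for $|s|$ small and then taking $\lambda_k$ large, replaces the paper's Lemma \ref{tec} (which instead scales spheres up and uses $|f(s)|\geq f(1)\sqrt{|s|}$ on the sets where $|v|>1$); both routes work precisely because $\lambda_k$ is allowed to depend on $k$. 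Two small repairs: the uniform $L^{\infty}$ bound on positive solutions does not follow from ``coercivity'' of the functional (solutions need not have controlled energy a priori); it follows either from the energy identity plus subcritical growth and bootstrapping, or, as in Lemma \ref{bounde}, from the comparison $v_{\lambda}\leq C\psi$ with $\psi$ the unique solution of $-\Delta w=w^{(q-1)/2}$.

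The genuine gap is in (a)(iii), and it is exactly the point you yourself flag as the main obstacle: you give no mechanism producing a \emph{second} positive solution for \emph{every} $\lambda>\lambda_{\ast}$. Your only concrete device is the global minimizer of $J_{\lambda}$, which is nontrivial only when $\lambda$ is large enough that $J_{\lambda}(t\varphi_1)<0$; for $\lambda$ slightly above $\lambda_{\ast}$ the global minimum may well be $0$, and ``a bifurcation/topological-degree analysis at the turning point'' is a placeholder, not an argument. Moreover the auxiliary objects you lean on are shaky: since $g$ is superlinear at the origin, $\varepsilon\varphi_1$ is \emph{not} a subsolution, so ``the minimal positive solution by monotone iteration'' has no natural starting point and need not exist; and $v_{\lambda}$ is first declared to be the global minimizer and later ``the maximal solution lying below the $\lambda$-supersolution''---two different objects, with the multiplicity claim attached to the first and the monotonicity/blow-up claims attached to the second. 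The paper's route is the opposite of yours: the a priori bound $v\leq C\psi$ yields a \emph{maximal} solution $\xi_{\lambda}$ by downward monotone iteration for every $\lambda\geq\lambda_{\ast}$, increasing in $\lambda$ (Lemmas \ref{bounde} and \ref{este}); for $\lambda>\lambda_{\ast}$ one minimizes $I$ over the closed convex set $\{v\in H_0^1(\Omega):\,v\geq\xi_{\lambda_{\ast}}\}$ (Lemma \ref{funct}), the minimizer $v_{\lambda}$ is, by the strong maximum principle, interior to the positive cone around $\xi_{\lambda_{\ast}}$ and hence a local minimizer of the free functional, and the second, smaller solution $w_{\lambda}$ is then sought as a further critical point inside the order interval $\{0\leq v\leq v_{\lambda}\}$, exploiting that $0$ and $v_{\lambda}$ are two distinct local minima---an argument valid for all $\lambda>\lambda_{\ast}$, not just large $\lambda$. (For fairness: the paper's own write-up of this last step, and of the monotonicity and $|v_{\lambda}|_{\infty}\to\infty$ assertions, is itself incomplete in the source, but the constrained-minimization-plus-order-interval structure is the intended mechanism, and it is what your sketch is missing.)
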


\begin{theorem}\label{teor4}
Suppose the function $\vartheta$ satisfies $(\vartheta_{1})-(\vartheta_{3})$ and $q=3$. Then, \eqref{P2} has at least one positive solution if, and only if, $\lambda>(\alpha^{2}/4)\lambda_{1}$. Moreover,
$$
\lim_{\lambda\to (\alpha^{2}/4)\lambda_{1}}|u_{\lambda}|_{\infty}=\infty.
$$
\end{theorem}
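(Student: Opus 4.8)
\medskip

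\noindent\emph{Proof strategy for Theorem \ref{teor4}.} I would begin, exactly as in the analysis leading to Theorems \ref{teor1}--\ref{teor3}, by removing the quasilinear term through the change of variables $v=F(u)$ with $F(s):=\int_{0}^{s}\sqrt{\vartheta(t)}\,dt$. Since $F'=\sqrt{\vartheta}\ge 1$ and $F''=\vartheta'/(2\sqrt{\vartheta})$ is continuous, $F$ is an odd $C^{2}$-diffeomorphism of $\R$; writing $G:=F^{-1}$, problem \eqref{P2} with $q=3$ is equivalent to the semilinear problem
$$
-\Delta v=\lambda\, h(v)\ \text{ in }\Omega,\qquad v=0\ \text{ on }\partial\Omega,\qquad h(v):=\frac{G(v)^{3}}{\sqrt{\vartheta(G(v))}},
$$
in the sense that $u$ is a (weak, hence classical) positive solution of \eqref{P2} if and only if $v=F(u)$ is one of the above. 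The whole argument then rests on three elementary properties of $h$. First, $h$ is odd, $C^{1}$, with $h(0)=0$ and $h'(0)=0$, since near the origin $G(v)\sim v/\sqrt{\vartheta(0)}$, so $h(v)\sim v^{3}/\vartheta(0)^{2}$. Second, using $(\vartheta_{3})$ and $F(s)=\int_{0}^{s}\sqrt{\vartheta}\sim \alpha s^{2}/(2\sqrt{2})$ as $s\to+\infty$, one computes $\lim_{v\to+\infty}h(v)/v=4/\alpha^{2}$; thus the semilinear problem is asymptotically linear at infinity with slope $4/\alpha^{2}$, which is exactly what makes $(\alpha^{2}/4)\lambda_{1}$ the natural threshold. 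Third, $(\vartheta_{2})$ and $(\vartheta_{3})$ together force $\vartheta(s)/s^{2}\ge \alpha^{2}/2$ for every $s$, hence $F(s)\ge s\sqrt{\vartheta(s)}/2$, which yields the global bound $0\le h(v)\le(4/\alpha^{2})v$ for $v\ge 0$, strict for small $v>0$.

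\medskip

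\noindent The nonexistence part is then immediate. If $v>0$ solved the semilinear problem, testing against the positive first eigenfunction $\varphi_{1}$ of $(-\Delta,H_{0}^{1}(\Omega))$ gives $\lambda_{1}\int_{\Omega}v\varphi_{1}=\lambda\int_{\Omega}h(v)\varphi_{1}$; since a positive solution vanishes on $\partial\Omega$, $v$ is small near $\partial\Omega$, where $h(v)<(4/\alpha^{2})v$ strictly while $\varphi_{1}>0$, so $\int_{\Omega}h(v)\varphi_{1}<(4/\alpha^{2})\int_{\Omega}v\varphi_{1}$ and hence $\lambda>(\alpha^{2}/4)\lambda_{1}$. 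This shows that \eqref{P2} has no positive solution for $\lambda\le(\alpha^{2}/4)\lambda_{1}$.

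\medskip

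\noindent For existence when $\lambda>(\alpha^{2}/4)\lambda_{1}$ I would truncate: set $\bar h(v):=h(v)$ for $v\ge 0$, $\bar h(v):=0$ for $v<0$, $\bar H(v):=\int_{0}^{v}\bar h$, and work with $I_{\lambda}(v)=\tfrac12\int_{\Omega}|\nabla v|^{2}-\lambda\int_{\Omega}\bar H(v)$ on $H_{0}^{1}(\Omega)$, whose nontrivial critical points are positive solutions by the strong maximum principle (test $I_{\lambda}'(v)$ against $v^{-}$). The mountain pass geometry is easy: $\bar H(v)=o(v^{2})$ at $0$ and $\bar H(v)\le(2/\alpha^{2})v^{2}$ globally give $\bar H(v)\le\varepsilon v^{2}+C_{\varepsilon}|v|^{2^{*}}$, hence $I_{\lambda}\ge\rho>0$ on a small sphere, while $I_{\lambda}(t\varphi_{1})\sim\tfrac{t^{2}}{2}(\lambda_{1}-4\lambda/\alpha^{2})\int_{\Omega}\varphi_{1}^{2}\to-\infty$ since $4\lambda/\alpha^{2}>\lambda_{1}$. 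The main obstacle, as always for asymptotically linear problems, is the Palais--Smale condition. Given $(v_{n})$ with $I_{\lambda}(v_{n})$ bounded and $I_{\lambda}'(v_{n})\to 0$, testing against $v_{n}^{-}$ yields $\|v_{n}^{-}\|\to 0$; if $\|v_{n}\|\to\infty$ I would set $w_{n}:=v_{n}/\|v_{n}\|$ and pass to the limit in $-\Delta w_{n}=\lambda\tfrac{\bar h(v_{n})}{v_{n}}w_{n}+o(1)$, using that $\bar h(v_{n})/v_{n}\le 4/\alpha^{2}$ is bounded in $L^{\infty}$ to obtain $w_{n}\to w$ strongly in $H_{0}^{1}(\Omega)$ with $\|w\|=1$, $w\ge 0$; then $w>0$ by the maximum principle, so $v_{n}\to+\infty$ a.e., whence $\bar h(v_{n})/v_{n}\to 4/\alpha^{2}$ and $-\Delta w=(4\lambda/\alpha^{2})w$ with $w>0$, forcing $4\lambda/\alpha^{2}=\lambda_{1}$ --- a contradiction. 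Hence Palais--Smale sequences are bounded and, $\bar h$ having subcritical growth, compactness follows; the mountain pass theorem produces a positive solution.

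\medskip

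\noindent Finally, for the blow-up, take $\lambda_{n}\downarrow(\alpha^{2}/4)\lambda_{1}$ and positive solutions $v_{n}$ of the semilinear problem. If $|v_{n}|_{\infty}$ were bounded along a subsequence, elliptic regularity would bound $v_{n}$ in $C^{1,\gamma}(\overline{\Omega})$, so $v_{n}\to v$ in $C^{1}$ with $-\Delta v=(\alpha^{2}/4)\lambda_{1}h(v)$, $v\ge 0$; if $v\not\equiv 0$ the maximum principle gives a positive solution at $\lambda=(\alpha^{2}/4)\lambda_{1}$, contradicting the nonexistence just established, and if $v\equiv 0$ the rescaling $w_{n}=v_{n}/\|v_{n}\|$, now with $\bar h(v_{n})/v_{n}\to h'(0)=0$ a.e., gives $1=\|w_{n}\|^{2}=\lambda_{n}\int_{\Omega}\tfrac{h(v_{n})}{v_{n}}w_{n}^{2}\to 0$, again impossible. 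Thus $|v_{n}|_{\infty}\to\infty$, and since $u_{n}=G(v_{n})$ with $G$ increasing and $G(t)\to\infty$ as $t\to\infty$, we conclude $|u_{n}|_{\infty}=G(|v_{n}|_{\infty})\to\infty$. (Alternatively, both the existence for every $\lambda>(\alpha^{2}/4)\lambda_{1}$ and the blow-up follow at once from Rabinowitz's bifurcation-from-infinity theorem applied at the characteristic value $(\alpha^{2}/4)\lambda_{1}$, together with the a priori bound for positive solutions on compact subintervals of $((\alpha^{2}/4)\lambda_{1},\infty)$, which is obtained by the same rescaling argument.)
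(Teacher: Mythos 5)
Your argument is correct, but the existence part takes a genuinely different route from the paper. You reduce to the same dual problem (your $h$ is exactly the paper's $g=f'|f|^{2}f$ for $q=3$) and your nonexistence step is essentially the paper's: both rest on $g(s)/s<4/\alpha^{2}$ for $s>0$ plus a comparison with $\lambda_{1}$ (the paper phrases it through monotonicity of the principal eigenvalue, you test directly against $\varphi_{1}$; either works). For existence with $\lambda>(\alpha^{2}/4)\lambda_{1}$, however, the paper does not use a mountain pass at all: it invokes bifurcation from infinity (Theorem 7.1.3 of L\'opez-G\'omez, cf.\ Ambrosetti--Hess), noting $\lim_{s\to\infty}g(s)/s=4/\alpha^{2}$, so an unbounded continuum of positive solutions emanates from infinity at $\lambda=(\alpha^{2}/4)\lambda_{1}$; it then rules out the continuum meeting $\lambda$-axis values with $|v|_{\infty}\to0$ via $g(s)/s\to0$ at $0$, so the projection of the continuum is $((\alpha^{2}/4)\lambda_{1},\infty)$, and the blow-up as $\lambda\downarrow(\alpha^{2}/4)\lambda_{1}$ comes for free from the bifurcation-from-infinity structure (your parenthetical alternative is precisely the paper's proof). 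Your variational route -- truncation, mountain pass geometry from $h'(0)=0$ and the asymptotically linear slope $4/\alpha^{2}$, and the standard rescaling argument $w_{n}=v_{n}/\|v_{n}\|$ to show boundedness of Palais--Smale sequences -- is self-contained and avoids the abstract global bifurcation theorem, at the price of the PS analysis; your direct compactness argument for the blow-up is in fact more explicit than what the paper writes. One small wording issue: the inequality $F(s)\ge s\sqrt{\vartheta(s)}/2$ is not a consequence of the pointwise bound $\vartheta(s)/s^{2}\ge\alpha^{2}/2$, but of the monotonicity in $(\vartheta_{2})$ (integrate $\sqrt{\vartheta(t)}\ge (t/s)\sqrt{\vartheta(s)}$ for $0\le t\le s$; this is Proposition \ref{pp2}$(v)$); both facts are true and your chain $h(v)/v\le 2f(v)^{2}/\vartheta(f(v))\le 4/\alpha^{2}$ then goes through, so this is only a matter of attribution, not a gap.
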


\begin{theorem}\label{teor5}
Suppose the function $\vartheta$ satisfies $(\vartheta_{1})-(\vartheta_{3})$ and $q\in (3, 22^{\ast}-1)$, where $2^{\ast}=2N/(N-2)$. The following claims hold:
\begin{enumerate}
\item[$(a)$]  \eqref{P2} has at least one positive solution if, and only if, $\lambda>0$. Moreover,
$$
\lim_{\lambda\to 0}|u_{\lambda}|_{\infty}=\infty.
$$

\item[$(b)$] \eqref{P2} has infinitely many solutions with high energy, for each $\lambda>0$.
\end{enumerate}
\end{theorem}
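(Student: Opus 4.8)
The backbone of the argument is the change of variables that linearises the quasilinear principal part. Put $g(s)=\int_{0}^{s}\sqrt{\vartheta(t)}\,dt$; since $\vartheta\ge 1$ is even, $g$ is an odd increasing $C^{1}$-diffeomorphism of $\R$, and the substitution $v=g(u)$ (for which $|\nabla v|^{2}=\vartheta(u)|\nabla u|^{2}$) turns \eqref{P2} into the semilinear problem
\begin{equation*}
-\Delta v=\lambda f(v)\quad\text{in }\Omega,\qquad v=0\quad\text{on }\partial\Omega,\qquad f(v):=\frac{|g^{-1}(v)|^{q-1}g^{-1}(v)}{\sqrt{\vartheta(g^{-1}(v))}},
\end{equation*}
whose natural energy $J_{\lambda}(v)=\tfrac12\|v\|_{H_{0}^{1}(\Omega)}^{2}-\lambda\int_{\Omega}F(v)$, $F(v)=\int_{0}^{v}f$, coincides with the (formal) energy $I_{\lambda}(u)=\tfrac12\int_{\Omega}\vartheta(u)|\nabla u|^{2}-\tfrac{\lambda}{q+1}\int_{\Omega}|u|^{q+1}$ of \eqref{P2} evaluated at $u=g^{-1}(v)$; the correspondence between weak solutions is legitimate once $v\in L^{\infty}(\Omega)$, which follows from subcriticality by Moser iteration. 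The first step is to extract, from $(\vartheta_{1})$–$(\vartheta_{3})$, the two asymptotic regimes of $f$: near $0$ one has $g^{-1}(v)\sim v/\sqrt{\vartheta(0)}$, so $f(v)\sim c_{0}|v|^{q-1}v$ with $q>3>1$; at infinity $(\vartheta_{3})$ gives $g(s)\sim (\alpha/2\sqrt2)\,s^{2}$, hence $g^{-1}(v)\sim c_{1}v^{1/2}$ and $f(v)\sim c_{2}v^{(q-1)/2}$. The crucial observation is that $q\in(3,2\cdot 2^{\ast}-1)$ is \emph{exactly} the range for which the effective exponent $(q-1)/2$ is superlinear ($>1$) and subcritical ($<2^{\ast}-1$); consequently $F(v)=o(v^{2})$ as $v\to 0$, $F$ is superquadratic at infinity, $0\le F(v)\le C|v|^{(q+1)/2}$, and $vf(v)/F(v)\to (q+1)/2>2$.

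For part $(a)$ I would fix $\lambda>0$ and apply the mountain pass theorem to the truncated functional obtained by replacing $F$ by $\widehat F(v)=F(v)$ for $v\ge 0$ and $\widehat F(v)=0$ otherwise. Since $\widehat F(v)=o(v^{2})$ near $0$ and $\widehat F$ is superquadratic at $+\infty$, the functional has mountain pass geometry; the Palais--Smale condition follows from the Ambrosetti--Rabinowitz inequality for large $v$ together with the subcritical Sobolev embedding (the terms coming from the truncation are bounded and cause no trouble). The resulting critical point $v_{\lambda}$ is nontrivial; testing the equation against $v_{\lambda}^{-}$ gives $v_{\lambda}\ge 0$ and the strong maximum principle gives $v_{\lambda}>0$, so $u_{\lambda}=g^{-1}(v_{\lambda})>0$ solves \eqref{P2}. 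For necessity, if $\lambda\le 0$ admitted a positive solution, testing $-\Delta v=\lambda f(v)$ with $v$ would force $\|v\|^{2}=\lambda\int f(v)v\le 0$, hence $v\equiv0$; so a positive solution requires $\lambda>0$. For the blow-up as $\lambda\to0$: any nontrivial critical point satisfies $\|v_{\lambda}\|^{2}=\lambda\int f(v_{\lambda})v_{\lambda}$, and inserting $f(v)v\le \varepsilon v^{2}+C_{\varepsilon}|v|^{(q+1)/2}$ and absorbing the first term yields $\|v_{\lambda}\|\ge c\,\lambda^{-2/(q-3)}\to\infty$; if $|u_{\lambda}|_{\infty}$ (equivalently $|v_{\lambda}|_{\infty}$) stayed bounded along some $\lambda_{n}\to0$, the same identity would give $\|v_{\lambda_{n}}\|^{2}\le C\lambda_{n}\to0$, a contradiction, so $|u_{\lambda}|_{\infty}\to\infty$.

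For part $(b)$ I would keep the \emph{untruncated} even primitive $F(v)=\tfrac1{q+1}|g^{-1}(v)|^{q+1}$, so that $J_{\lambda}$ is even with $J_{\lambda}(0)=0$ and still satisfies $(PS)$ by the same Ambrosetti--Rabinowitz/subcriticality argument. Since $F$ is superquadratic and subcritical, the symmetric mountain pass theorem (equivalently the Fountain Theorem, using an orthonormal basis of $H_{0}^{1}(\Omega)$) yields an unbounded sequence of critical values $c_{k}\to+\infty$, hence infinitely many solutions $v_{k}$ of the semilinear problem with $J_{\lambda}(v_{k})\to\infty$. Then $u_{k}=g^{-1}(v_{k})$ are infinitely many solutions of \eqref{P2}, and since the substitution preserves the energy, $I_{\lambda}(u_{k})=J_{\lambda}(v_{k})\to\infty$, i.e.\ they have high energy.

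The main obstacle is not the variational scheme but checking that the implicit nonlinearity $f$ actually enjoys a \emph{global} Ambrosetti--Rabinowitz condition (a single $\mu\in(2,(q+1)/2]$ with $\mu F(v)\le vf(v)$ for all large $v$) and the two-sided power bounds on $F$; this is precisely where the monotonicity hypothesis $(\vartheta_{2})$ on $\vartheta(s)/s^{2}$ and the exact limit in $(\vartheta_{3})$ enter, and it is what fixes the admissible interval $q\in(3,2\cdot2^{\ast}-1)$. A more routine but still necessary point is the rigorous equivalence between weak solutions of the semilinear reformulation and of \eqref{P2}, which requires the $L^{\infty}$-bound on $v$ in order to justify $u=g^{-1}(v)\in H_{0}^{1}(\Omega)$ and the admissibility of the test functions.
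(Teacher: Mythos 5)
Your proposal is correct in substance and works entirely within the same dual framework (Proposition \ref{pp2} and the reduction to \eqref{P2'}), but for part $(a)$ it takes a genuinely different route. The paper does not use the mountain pass theorem for existence: it freezes $\lambda>0$, introduces the auxiliary problem $-\Delta w=\mu w+\lambda g(w)$, and applies bifurcation theory (Theorem 7.1.3 of L\'opez-G\'omez) at $\mu=\lambda_{1}$, using the subcritical asymptotics $g(s)/s^{(q-1)/2}\to(8/\alpha^{2})^{(q-1)/4}\sqrt{2}/\alpha$ together with the Gidas--Spruck a priori bounds to show that the unbounded continuum of positive solutions projects onto $(-\infty,\lambda_{1})$, hence contains $\mu=0$; positivity comes for free from the continuum, whereas you must pass through the truncation and the strong maximum principle. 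Your blow-up argument as $\lambda\to0$ is also different and, in fact, more quantitative: the identity $\|v_{\lambda}\|^{2}=\lambda\int_{\Omega}g(v_{\lambda})v_{\lambda}\,dx$ combined with $g(s)s\leq C|s|^{(q+1)/2}$ gives $\|v_{\lambda}\|\geq c\lambda^{-2/(q-3)}$ and then an $L^{\infty}$-bound would force $\|v_{\lambda}\|^{2}\leq C\lambda\to0$, a contradiction; the paper instead argues qualitatively by elliptic regularity and a normalization $u_{\lambda_{n}}/|u_{\lambda_{n}}|_{\infty}$, reaching a contradiction with the $\lambda=0$ problem. For part $(b)$ your scheme coincides with the paper's (even functional, symmetric mountain pass/Clark--Rabinowitz type theorem, unbounded critical levels); the only difference is how Palais--Smale boundedness is obtained: the paper tests $I'(v_{n})$ with $\varphi_{n}=f(v_{n})/f'(v_{n})$ and uses $(\vartheta_{2})$, while you invoke a global Ambrosetti--Rabinowitz inequality.

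On the one point you flag as ``the main obstacle'': it is not an obstacle at all, and you should close it explicitly. By Proposition \ref{pp2}$(v)$, for $s>0$,
\begin{equation*}
s\,g(s)=s f'(s) f(s)^{q}\geq \tfrac{1}{2}f(s)^{q+1}=\tfrac{q+1}{2}\,G(s),
\end{equation*}
so the Ambrosetti--Rabinowitz condition holds globally with $\mu=(q+1)/2>2$ (this uses only $(\vartheta_{1})$--$(\vartheta_{2})$; your asymptotic limit $sg(s)/G(s)\to(q+1)/2$, which needs $(\vartheta_{3})$, is not required). With this inequality in hand, your boundedness and compactness arguments, as well as the two-sided bounds $c|s|^{q+1}\leq$ (near $0$) and $G(s)\leq C|s|^{(q+1)/2}$ (globally, from $|f(s)|\leq(8/\alpha^{2})^{1/4}\sqrt{|s|}$), are all justified by the paper's preliminary lemmas, and both parts of your argument go through.
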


\begin{theorem}\label{teor6}
Suppose the function $\vartheta$ satisfies $(\vartheta_{1})-(\vartheta_{3})$, $q\in [22^{\ast}-1, \infty)$ and $\Omega$ is a starshaped domain. Then, \eqref{P2} has no positive solution.
\end{theorem}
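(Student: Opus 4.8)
The plan is to use a Pohozaev-type identity adapted to the quasilinear operator, exploiting the change of variables that linearizes the principal part of \eqref{P}. First, I would recall the substitution that has presumably been set up earlier in the paper: since $\vartheta$ is even and bounded below by $1$, define $g:\R\to\R$ by $g'(s)=\sqrt{\vartheta(g(s))}$ (or, equivalently, $f(t)=\int_0^t\sqrt{\vartheta(\tau)}\,d\tau$ with $g=f^{-1}$), so that $f$ is an increasing diffeomorphism of $\R$. If $u$ is a positive solution of \eqref{P} and we set $v=f(u)$, a direct computation shows that $v>0$ solves the semilinear problem
\begin{equation}\label{Psemi-poh}
\left\{\begin{array}{ll}
-\Delta v=\lambda\,\dfrac{f(u)^{q}}{\sqrt{\vartheta(u)}}\Big|_{u=g(v)}=:h(v) & \mbox{in }\Omega,\\[2mm]
v=0 & \mbox{on }\partial\Omega.
\end{array}\right.
\end{equation}
Because $u$ is a genuine (weak, and by elliptic regularity classical) positive solution, so is $v$, and $v\in C^2(\overline\Omega)$ with $v>0$ in $\Omega$; here one uses $N\geq 3$ and the structure of the nonlinearity together with the growth bound $q<22^\ast-1$ is \emph{not} needed for this direction — only the lower bound $q\geq 22^\ast-1$ will matter.

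Next I would apply the classical Pohozaev identity to \eqref{Psemi-poh} on the starshaped domain $\Omega$. Writing $H(v)=\int_0^v h(\tau)\,d\tau$, the identity reads
\begin{equation}\label{poh-id}
\Big(\tfrac{N-2}{2}\Big)\int_\Omega |\nabla v|^2\,dx - N\int_\Omega H(v)\,dx = -\tfrac12\int_{\partial\Omega}(x\cdot\nu)\,|\partial_\nu v|^2\,d\sigma \le 0,
\end{equation}
the inequality being exactly where starshapedness (with respect to the origin, say) enters: $x\cdot\nu\ge 0$ on $\partial\Omega$. Combining \eqref{poh-id} with the equation tested against $v$ itself, namely $\int_\Omega|\nabla v|^2 = \int_\Omega h(v)v$, yields
\begin{equation}\label{poh-combined}
\int_\Omega\Big[N\,H(v) - \tfrac{N-2}{2}\,h(v)v\Big]\,dx \ge 0.
\end{equation}
The crux is then to show that $(\vartheta_1)$–$(\vartheta_3)$ together with $q\ge 22^\ast-1$ force the integrand in \eqref{poh-combined} to be \emph{strictly negative} wherever $v>0$, which contradicts \eqref{poh-combined} since $v>0$ on a set of positive measure. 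This reduces to a pointwise inequality for the function $t\mapsto N H(f^{-1}\!\text{-transformed})$ versus $\frac{N-2}{2} h(\cdot)\cdot$; concretely one must control the asymptotic behavior of $f(s)=\int_0^s\sqrt{\vartheta}$. By $(\vartheta_3)$, $\sqrt{\vartheta(s)}\sim (\alpha/\sqrt2)|s|$ as $|s|\to\infty$, so $f(s)\sim (\alpha/2\sqrt2)\,s^2$ for large $s$; hence $g(v)=f^{-1}(v)\sim (2\sqrt2/\alpha)^{1/2}v^{1/2}$, and the nonlinearity $h(v)$ behaves for large $v$ like $\lambda\, g(v)^{q}/\sqrt{\vartheta(g(v))}\sim c\, v^{(q-1)/2}$. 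Thus \eqref{Psemi-poh} is, asymptotically, a Lane–Emden problem with exponent $(q-1)/2$, and $q\ge 22^\ast-1$ means $(q-1)/2\ge 2^\ast-1$, i.e. the supercritical (or critical) regime for which Pohozaev gives nonexistence.

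The main obstacle — and where the monotonicity hypotheses $(\vartheta_1)$–$(\vartheta_2)$ do the real work — is that the asymptotic heuristic above is not enough: the Pohozaev obstruction requires the pointwise inequality $N H(v)\le \frac{N-2}{2}h(v)v$ for \emph{all} $v\ge 0$, not just large $v$. I would therefore recast the target inequality back in the $u$-variable. Using $H(v)=\int_0^{g(v)}h(f(s))f'(s)\,ds = \lambda\int_0^{g(v)} f(s)^q\,ds$ and $h(v)v = \lambda f(g(v))^q g(v)\sqrt{\vartheta(g(v))}$... wait, more cleanly: the functional whose critical points are the transformed solutions has, in the $u$-variable, kinetic part $\frac12\int\vartheta(u)|\nabla u|^2$ and potential part $\frac{\lambda}{q+1}\int |u|^{q+1}$, and the Pohozaev identity for the original quasilinear problem can be derived directly by multiplying \eqref{P} by $x\cdot\nabla u$ and integrating. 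This produces
\begin{equation}\label{poh-orig}
\frac{N-2}{2}\int_\Omega \vartheta(u)|\nabla u|^2\,dx + \frac12\int_\Omega x\cdot\nabla\!\big(\vartheta(u)\big)|\nabla u|^2\,dx \;\text{(lower-order)}\; = \frac{N\lambda}{q+1}\int_\Omega u^{q+1}\,dx - \text{(boundary)},
\end{equation}
and after combining with the energy identity $\int\vartheta(u)|\nabla u|^2 = \lambda\int u^{q+1}$ one isolates a coefficient of the form $\lambda\big(\frac{N}{q+1}-\frac{N-2}{2}\cdot\frac{\text{something}\le 1}{1}\big)\int u^{q+1}$ plus a manifestly signed correction term governed by $(\vartheta_2)$ (which says $s\vartheta'(s)\le 2\vartheta(s)$, controlling the $x\cdot\nabla(\vartheta(u))$ term). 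The condition $q\ge 22^\ast-1$ is precisely what makes $\frac{N}{q+1}-\frac{N-2}{2}\cdot(\text{the effective exponent factor})\le 0$; since the boundary term has a favorable sign on a starshaped domain and the $(\vartheta_2)$-term has the sign that cannot rescue the inequality, we conclude $\int_\Omega u^{q+1}=0$, hence $u\equiv 0$. The delicate point to get right is the bookkeeping of exactly which combination of $N$, $q$, $2^\ast$, and the structural constants from $(\vartheta_1)$–$(\vartheta_3)$ appears, and verifying that the threshold is $q=22^\ast-1$ and not some nearby value; I expect this to be a careful but elementary computation once the correct Pohozaev identity for $(P_{\lambda,q})$ is in hand.
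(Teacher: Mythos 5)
Your strategy coincides with the paper's: pass to the dual semilinear problem $-\Delta v=\lambda g(v)$ with $g(s)=f'(s)|f(s)|^{q-1}f(s)$, apply Pohozaev's identity on the starshaped domain, combine it with the energy identity $\int_\Omega|\nabla v|^2\,dx=\lambda\int_\Omega g(v)v\,dx$, and reach a contradiction through a pointwise comparison of $N G(s)$ with $\frac{N-2}{2}g(s)s$; your identification of the threshold (the dual nonlinearity is asymptotically Lane--Emden with exponent $(q-1)/2$, and $q\ge 22^{\ast}-1$ is exactly $(q-1)/2\ge 2^{\ast}-1$) is also correct. However, there is a genuine gap: the entire theorem rests on the \emph{global} pointwise inequality $N G(s)\le\frac{N-2}{2}g(s)s$ (strict for $s\ne0$), and you never prove it. You rightly observe that the asymptotic heuristic is insufficient, but then switch to an unfinished Pohozaev identity in the $u$-variable containing unspecified ``lower-order'', ``something $\le 1$'' and ``effective exponent factor'' terms, and finally declare the decisive estimate to be ``careful but elementary bookkeeping''. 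That bookkeeping is precisely where the hypotheses $(\vartheta_{1})$--$(\vartheta_{2})$ enter, and nothing in your text names the structural fact that makes the inequality hold for every $s$ rather than only for large $|s|$.

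The missing ingredient is Proposition \ref{pp2}$(v)$ (a consequence of $(\vartheta_{1})$--$(\vartheta_{2})$): $f'(s)|s|\ge|f(s)|/2$. Since $G(s)=\frac{1}{q+1}|f(s)|^{q+1}$ exactly, this gives $g(s)s\ge\frac12|f(s)|^{q+1}$, hence
\begin{equation*}
\frac{N-2}{2}\,g(s)s-N\,G(s)\;\ge\;\Bigl(\frac{N-2}{4}-\frac{N}{q+1}\Bigr)|f(s)|^{q+1}\;\ge\;0
\qquad\text{precisely when } q+1\ge 2\cdot 2^{\ast},
\end{equation*}
which, inserted into your combined Pohozaev/energy inequality, finishes the proof. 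The paper establishes the same pointwise fact by a monotonicity argument: it sets $z(s)=\frac{N-2}{2}g(s)s-NG(s)$, notes $z(0)=0$, and shows $z'\ge 0$ by bounding $g(s)/(sg'(s))$ by $2/(q-1)\le (N-2)/(N+2)$, using $(\vartheta_{2})$ in the form $\vartheta'(t)t\le 2\vartheta(t)$ together with Proposition \ref{pp2}$(v)$. Either version would close your argument; without one of them the proposal is a plan, not a proof. Two smaller points: your change of variables is stated with a slip (if $f(t)=\int_0^t\sqrt{\vartheta(\tau)}\,d\tau$, its inverse satisfies $(f^{-1})'(s)=1/\sqrt{\vartheta(f^{-1}(s))}$, not $\sqrt{\vartheta(\cdot)}$; harmless, as your later formulas use the correct substitution), and since your contradiction requires the integrand to be \emph{strictly} negative where $v>0$, at the borderline exponent $q=22^{\ast}-1$ you must verify strictness (the displayed estimate is only non-strict there), for instance via the strict bound on $g(s)/(sg'(s))$ as in the paper.
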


In the sequel we fix some notation which will be used along the paper: $\lambda_{k}$ denotes the $k$-th eigenvalue of the laplacian operator with homogeneous Dirichlet boundary condition. The unique eigenfunction associated to $\lambda_{k}$ and normalized in $L^{\infty}(\Omega)$ will be denoted by $\varphi_{k}$. The function $e$ is the unique solution of the problem
\begin{equation}\label{prob}
\left \{ \begin{array}{ll}
-\Delta e=1 & \mbox{in $D$,}\\
e=0 & \mbox{on $\partial D$,}
\end{array}\right.
\end{equation}
for some bounded and smooth domain $D\supset \overline{\Omega}$. Moreover, $e_{L}:=\min_{x\in\overline{\Omega}}e(x)>0$ and $e_{M}:=\max_{x\in\overline{\Omega}}e(x)$. The same letter $C$ stands for different positive constants whose exact value is irrelevant.

The paper is organized as follows. 

In Section \ref{se:eigen} we study a suitable change of variable which becomes problem \eqref{P} in a more manageable one. 
In Section \ref{se:trivial} we prove the main theorems of the paper.

\section{The dual formulation} \label{se:eigen}

In this section our main goal is to show that one can switch the task to look for solutions of the general semilinear problem
\begin{equation}\label{SP}\tag{SP}
\left \{ \begin{array}{ll}
-div( \vartheta(u)\nabla u)+\frac{1}{2}\vartheta'(u)|\nabla u|^{2}=p(u) & \mbox{in $\Omega$,}\\
u=0 & \mbox{on $\partial\Omega$,}
\end{array}\right.
\end{equation}
for the task to find solutions of 
\begin{equation}\label{DP}\tag{DP}
\left\{
\begin{array}{ll}
-\Delta v=f'(v)p(f(v)) & \mbox{in $\Omega$,}\\
v=0 & \mbox{on $\partial\Omega$,}
\end{array}\right.
\end{equation}
where $f\in C^{2}(\R)$ is a solution of the ordinary differential equation
\begin{equation}\label{ODE}\tag{ODE}
f'(s)=\frac{1}{\vartheta(f(s))^{1/2}} \ \mbox{for} \ s>0 \ \mbox{and} \ f(0)=0,
\end{equation}
with $f(s)=-f(-s)$ for $s\in (-\infty, 0)$.   

\medskip

Next proposition plays an important role throughout the paper.

\begin{proposition} \label{pp2}
Let $\vartheta\in C^{1}(\R)$ and $f$ a solution of \eqref{ODE}. The following claims hold:
\begin{enumerate}
\item[$(i)$] $f$ is uniquely defined and it is an increasing $C^{2}$-diffeomorphism, with $f''(s)=-\vartheta'(f(s))/2\vartheta(f(s))^{2}$, for all $s>0$;
\item[$(ii)$] $0< f'(s)\leq 1$, for all $s\in\R$;
\item[$(iii)$]  $\lim_{s\to 0}f(s)/s=1/\vartheta(0)^{1/2}$; 
\item[$(iv)$]  $ |f(s)|\leq |s|$, for all $s\in\R$; 
\item[$(v)$] Suppose $(\vartheta_{1})-(\vartheta_{2})$ hold. Then, $|f(s)|/2\leq f'(s)|s|\leq |f(s)|$, for all $s\in\R$,
and the map $s\mapsto |f(s)|/\sqrt{|s|}$ is nonincreasing in $(-\infty, 0)$ and nondecreasing in $(0, \infty)$; 
\item[$(vi)$] Suppose that $(\vartheta_{1})-(\vartheta_{3})$ hold. Then,
$$
\dis\lim_{|s|\to\infty}\frac{|f(s)|}{\sqrt{|s|}}=\left(\frac{8}{\alpha^{2}}\right)^{1/4} \ \mbox{and} \ \lim_{|s|\to\infty}\frac{f(s)}{s}=0,
$$
where $\alpha$ is given in $(\vartheta_{3})$. Moreover, there exists $C>0$ such that 
$$
|f'(s)f(s)|\leqslant C, \ \forall \ s\in\R.
$$ 

\end{enumerate}
\end{proposition}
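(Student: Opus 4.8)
The plan is to dispatch the six items in order, the organizing idea being to work with the inverse map $F:=f^{-1}$, which turns every statement into an elementary inequality about $\int_0^y\vartheta(t)^{1/2}\,dt$. First, for (i)--(iv): since $\vartheta\in C^1(\R)$ and $\vartheta\geq 1$, the function $(s,y)\mapsto\vartheta(y)^{-1/2}$ is $C^1$, locally Lipschitz in $y$ and bounded by $1$, so by the Picard--Lindel\"{o}f theorem \eqref{ODE} has a unique solution near $s=0$, and the a priori bound $0<f'\leq 1$ rules out finite-time blow-up, so $f$ extends to $[0,\infty)$; extending oddly (the evenness of $\vartheta$ makes $f$ of class $C^2$ across $0$) defines $f$ on $\R$. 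Differentiating $f'(s)=\vartheta(f(s))^{-1/2}$ and using that $\vartheta,f\in C^1$ gives $f\in C^2$ together with the stated formula for $f''$. That $f$ is a diffeomorphism onto $\R$ follows from $f'>0$ (so $f$ is a $C^2$ increasing injection with $C^2$ inverse) plus surjectivity: if $f$ were bounded on $[0,\infty)$, then $\vartheta\circ f$ would be bounded there by continuity, forcing $f'\geq c>0$ and hence $f(s)\to\infty$, a contradiction. Then (ii) is immediate from $\vartheta\geq 1$, (iii) is $f(s)/s\to f'(0)=\vartheta(0)^{-1/2}$, and (iv) is the integration of $0<f'\leq 1$ with $f(0)=0$, extended by oddness.

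For (v) I pass to $F:=f^{-1}$, so that \eqref{ODE} gives $F'(y)=\vartheta(y)^{1/2}$ and $F(y)=\int_0^y\vartheta(t)^{1/2}\,dt$ for $y\geq 0$. Writing $s=F(y)$, $y=f(s)\geq 0$, the right inequality $f'(s)\,|s|\leq|f(s)|$ reads $F(y)\leq y\,\vartheta(y)^{1/2}$, which holds because $(\vartheta_1)$ makes $\vartheta$ nondecreasing on $(0,\infty)$, whence $\vartheta(t)^{1/2}\leq\vartheta(y)^{1/2}$ for $0\leq t\leq y$; the left inequality $|f(s)|/2\leq f'(s)\,|s|$ reads $2F(y)\geq y\,\vartheta(y)^{1/2}$, which I obtain by factoring $\vartheta(t)^{1/2}=t\,\rho(t)$ with $\rho$ nonincreasing on $(0,\infty)$ by $(\vartheta_2)$, so that $\int_0^y t\rho(t)\,dt\geq\rho(y)\int_0^y t\,dt=\tfrac{y}{2}\,y\rho(y)$. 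Finally, $\tfrac{d}{dy}\big(y/\sqrt{F(y)}\big)$ has the sign of $2F(y)-yF'(y)=2F(y)-y\,\vartheta(y)^{1/2}\geq 0$, so $s\mapsto f(s)/\sqrt{s}$ is nondecreasing on $(0,\infty)$; oddness transfers both the two-sided estimate and the monotonicity statement to $(-\infty,0)$.

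For (vi), since $f(s)\to\infty$ as $s\to\infty$, hypothesis $(\vartheta_3)$ yields $\vartheta(f(s))/f(s)^2\to\alpha^2/2$, hence $f(s)f'(s)=f(s)/\vartheta(f(s))^{1/2}\to\sqrt{2}/\alpha$; then $(f(s)^2)'=2f(s)f'(s)\to 2\sqrt{2}/\alpha$ and l'H\^{o}pital's rule for the $\infty/\infty$ form gives $f(s)^2/s\to 2\sqrt{2}/\alpha=\sqrt{8}/\alpha$, so $|f(s)|/\sqrt{|s|}\to(8/\alpha^2)^{1/4}$ by taking square roots and using oddness, and $f(s)/s=(f(s)/\sqrt{s})\cdot s^{-1/2}\to 0$. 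The bound $|f'(s)f(s)|\leq C$ follows because $f'f$ is continuous on $\R$, odd, and tends to $\sqrt{2}/\alpha$ at $+\infty$, hence is bounded. I expect the only genuinely delicate step to be the lower estimate in (v): via the substitution it is exactly equivalent to the integral inequality $2\int_0^y\vartheta(t)^{1/2}\,dt\geq y\,\vartheta(y)^{1/2}$, and the clean way to see it is the factorization $\vartheta(t)^{1/2}=t\rho(t)$ with $\rho$ nonincreasing extracted from $(\vartheta_2)$; once that observation is in place, everything else is bookkeeping.
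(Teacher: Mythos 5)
Your proposal is correct, and on the two substantive items, (v) and (vi), it takes a genuinely different route from the paper; items (i)--(iv) are essentially the same in both (the paper obtains surjectivity by writing $f=\Upsilon^{-1}$ with $\Upsilon(t)=\int_{0}^{t}\vartheta(r)^{1/2}dr$ and noting $|\Upsilon(t)|\geq|t|$, while you argue by contradiction from boundedness of $f$; same content). For (v), the paper differentiates the auxiliary functions $r_{1}(s)=f(s)\vartheta(f(s))^{1/2}-s$ and $r_{2}(s)=2s-f(s)\vartheta(f(s))^{1/2}$ and uses the differential consequences of $(\vartheta_{1})$--$(\vartheta_{2})$, namely $\vartheta'(t)t\geq 0$ and $\vartheta'(t)t\leq 2\vartheta(t)$ on $(0,\infty)$; you instead pass to $F=f^{-1}$, $F(y)=\int_{0}^{y}\vartheta(t)^{1/2}dt$, and prove the equivalent integral inequalities $F(y)\leq y\,\vartheta(y)^{1/2}\leq 2F(y)$ by comparing the integrand with its endpoint value, using only the monotonicity of $\vartheta$ and of $\vartheta(t)^{1/2}/t$ exactly as stated in the hypotheses; this never differentiates $\vartheta$ and is arguably the cleaner way to see the lower bound, whereas the paper's computation is a touch shorter once the pointwise inequality $\vartheta'(t)t\leq 2\vartheta(t)$ is accepted, and both give the monotonicity of $|f(s)|/\sqrt{|s|}$ from the same sign condition $2f'(s)s-f(s)\geq 0$. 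For (vi) the difference is more marked: the paper first invokes the monotonicity from (v) to know that $\lim_{s\to\infty}f(s)/\sqrt{s}=l\in(0,\infty]$ exists, excludes $l=\infty$ by a L'H\^{o}pital contradiction, solves the resulting equation $l=2\sqrt{(1/l^{2})/(\alpha^{2}/2)}$, and only then deduces $f'(s)f(s)\to\sqrt{2}/\alpha$; you reverse the order, reading $f(s)f'(s)=\bigl(\vartheta(f(s))/f(s)^{2}\bigr)^{-1/2}\to\sqrt{2}/\alpha$ directly from $(\vartheta_{3})$ (using only $f(s)\to\infty$ from (i)) and then applying L'H\^{o}pital to $f(s)^{2}/s$, which avoids the case analysis, the self-referential equation for $l$, and indeed any use of (v); the paper's order, on the other hand, makes explicit how the monotonicity in (v) guarantees the existence of the limit. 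Both arguments are complete and yield the same constants.
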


\begin{proof}
$(i)$-$(ii)$ Existence, uniqueness, regularity, monotonicity and $(ii)$ follow directly from \eqref{ODE}.  To see that $f(\R)=\R$, observe that $f(s)=(\Upsilon^{-1})(s)$, where
$$
\Upsilon(t)=\int_{0}^{t}\vartheta(r)^{1/2}dr.
$$
Since $\vartheta\geq 1$, $|\Upsilon(t)|\geq |t|$ for all $t\in\R$. Consequently, $\lim_{|t|\to\infty}|\Upsilon(t)|=\infty$. Thence, $\lim_{|s|\to\infty}|f(s)|=\infty$. 

$(iii)$ Notice that, from L'H\^{o}spital rule, we get
$$
\dis\lim_{s\to 0}\frac{f(s)}{s}=\lim_{s\to 0}f'(s)=\frac{1}{\vartheta(0)^{1/2}}.
$$

$(iv)$ It follows from $(ii)$. $(v)$ Since $f$ is odd and $\vartheta$ is even, it is sufficient to prove the inequalities for $s>0$. For that, let $r_{1}:[0,\infty)\to\R$ defined by
\begin{equation*}
r_{1}(s)=f(s)\vartheta(f(s))^{1/2}-s.
\end{equation*}
Notice that $r_{1}(0)=0$ and, by \eqref{ODE} and $(\vartheta_{1})$, we have
$$
r_{1}'(s)=\vartheta'(f(s))f(s)/2\vartheta(f(s))>0.
$$
The second inequality in item $(v)$ is a direct consequence of the previous inequality. Now, to prove the first inequality in $(v)$, let $r_{2}:[0,\infty)\to\R$ defined by
\begin{equation*}
r_{2}(s)=2s-f(s)\vartheta(f(s))^{1/2}.
\end{equation*}
We have that $r_{2}(0)=0$ and, by \eqref{ODE} and $(\vartheta_{2})$,
$$
r_{2}'(s)=1-\vartheta'(f(s))f(s)/2\vartheta(f(s))\geq 0,
$$
showing that the inequality in $(v)$ holds. Moreover, since
$$
\left(\frac{f(s)}{\sqrt{s}}\right)'=\frac{2f'(s)s-f(s)}{2s\sqrt{s}}\geq 0, \ \forall \ s>0,
$$
the second part of $(v)$ follows.

$(vi)$ Observe that from $(v)$, we have 
$$
\lim_{|s|\to \infty}\frac{f(s)}{\sqrt{|s|}}=l, \ \mbox{with $l\in (0,\infty]$}.
$$ 
Again, since $f$ is odd and $\vartheta$ is even, it is sufficient to consider the case $s\to\infty$. Suppose that 
\begin{equation}\label{52}
\lim_{s\to \infty}f(s)/\sqrt{s}=\infty.
\end{equation} 
If this is the case then, by $(i)$, we get $f(s)\to\infty$ as $s\to\infty$. By applying the L'H\^{o}spital rule and using $(\vartheta_{3})$, we conclude from \eqref{52}, that
\begin{eqnarray*}
\lim_{s\to\infty}\frac{f(s)}{\sqrt{s}}&=&\dis\lim_{s\to\infty}2f'(s)\sqrt{s}\\
&=&2\dis\lim_{s\to\infty}\sqrt{\frac{s}{\vartheta(f(s))}}\\
&=&2\sqrt{\frac{\dis\lim_{s\to\infty}\left(\sqrt{s}/f(s)\right)^{2}}{\dis\lim_{s\to\infty}\vartheta(f(s))/f(s)^{2}}}\\
&=&2\sqrt{\frac{0}{(\alpha^{2}/2)}}=0.
\end{eqnarray*}
Showing that 
\begin{equation}\label{51}
\lim_{s\to\infty}f(s)/\sqrt{s}=0.
\end{equation}
Since \eqref{51} contradicts \eqref{52}, it follows that $0<\lim_{s\to\infty}f(s)/\sqrt{s}=l<\infty$. Applying one more time the L'H\^{o}spital rule, we have
$$
l=2\sqrt{\frac{\dis\lim_{s\to\infty}\left(\sqrt{s}/f(s)\right)^{2}}{\dis\lim_{s\to\infty}\vartheta(f(s))/f(s)^{2}}}=2\sqrt{\frac{1/l^{2}}{(\alpha^{2}/2)}}.
$$
Or equivalently,
\begin{equation}\label{53}
l=\left(\frac{8}{\alpha^{2}}\right)^{1/4}. 
\end{equation}
On the other hand, from \eqref{53},
$$
\dis\lim_{s\to\infty}\frac{f(s)}{s}=\dis\lim_{s\to\infty}\frac{f(s)}{\sqrt{s}}\frac{1}{\sqrt{s}}=\left(\frac{8}{\alpha^{2}}\right)^{1/4}\times 0=0. 
$$  
Finally, from
$$
\lim_{s\to\infty}f'(s)f(s)=\lim_{s\to\infty}f'(s)\sqrt{s}\times\lim_{s\to\infty}\frac{f(s)}{\sqrt{s}}=\frac{\sqrt{2}}{l\alpha}\times l=\frac{\sqrt{2}}{\alpha},
$$
the second part of $(vi)$ follows.
\end{proof}

\begin{remark}\label{rem1}
It is a consequence of the item $(i)$ in Proposition \ref{pp2} that $f$ is positive in $(0,\infty)$ and negative in $(-\infty, 0)$. Moreover, the inverse, $f^{-1}$, of $f$ is also a $C^{2}$-function.
\end{remark}

\begin{proposition}\label{change}
A function $v\in C^{1}(\overline{\Omega})$ is a weak solution of \eqref{DP} if, and only if, $u=f(v)\in C^{1}(\overline{\Omega})$ is a weak solution of \eqref{SP}.
\end{proposition}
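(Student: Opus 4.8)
The plan is to exploit the pointwise identities attached to \eqref{ODE} --- already collected in Proposition \ref{pp2} --- together with the observation that test functions for \eqref{SP} and for \eqref{DP} are in bijection through multiplication by $f'(v)$, so that the two weak formulations literally transform into one another.

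First I would record the chain-rule identities. Writing $u=f(v)$ one has $\nabla u=f'(v)\nabla v$; moreover \eqref{ODE} gives $\vartheta(f(v))f'(v)^{2}=1$ and Proposition \ref{pp2}$(i)$ gives $\vartheta'(f(v))=-2f''(v)\vartheta(f(v))^{2}$. Combining these yields
$$
\vartheta(u)\nabla u=\frac{\nabla v}{f'(v)}\qquad\text{and}\qquad \frac12\,\vartheta'(u)|\nabla u|^{2}=-\frac{f''(v)}{f'(v)^{2}}\,|\nabla v|^{2}.
$$
I would also note here that, since $f$ and $f^{-1}$ are of class $C^{2}$ (Proposition \ref{pp2}$(i)$ and Remark \ref{rem1}) and $f(0)=0$, the assignment $v\mapsto f(v)$ is a bijection between $\{v\in C^{1}(\overline{\Omega}):v=0\text{ on }\partial\Omega\}$ and $\{u\in C^{1}(\overline{\Omega}):u=0\text{ on }\partial\Omega\}$; hence the membership hypotheses in the statement are equivalent and the homogeneous boundary conditions are preserved under the change of variable.

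The algebraic heart is the identity, valid for every $\varphi\in C_{0}^{\infty}(\Omega)$,
$$
\int_{\Omega}\vartheta(u)\nabla u\cdot\nabla\varphi\,dx+\frac12\int_{\Omega}\vartheta'(u)|\nabla u|^{2}\varphi\,dx=\int_{\Omega}\nabla v\cdot\nabla\!\left(\frac{\varphi}{f'(v)}\right)dx,
$$
which I would obtain by inserting the two relations above and expanding $\nabla(\varphi/f'(v))=\nabla\varphi/f'(v)-(f''(v)/f'(v)^{2})\,\varphi\,\nabla v$. Since moreover $p(u)\varphi=f'(v)p(f(v))\cdot(\varphi/f'(v))$, the equivalence follows at once: if $v$ solves \eqref{DP}, one tests \eqref{DP} with $\psi=\varphi/f'(v)$ and recovers the weak formulation of \eqref{SP} with test function $\varphi$; conversely, given a test function $\psi\in C_{0}^{\infty}(\Omega)$ for \eqref{DP}, one applies the weak formulation of \eqref{SP} with $\varphi=f'(v)\psi$ and reads the displayed identity from right to left. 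If the weak formulations are stated with $H_{0}^{1}(\Omega)$ (intersected with $L^{\infty}(\Omega)$ in the case of \eqref{SP}) test functions, a routine density argument closes the gap.

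The only delicate point --- and the step I expect to be the main obstacle --- is the admissibility of the transformed test functions $\varphi/f'(v)$ and $f'(v)\psi$. This is exactly where the hypothesis $v\in C^{1}(\overline{\Omega})$ enters: then $v(\overline{\Omega})$ is compact, $f(v)$ is bounded, and since $\vartheta\in C^{1}$ with $\vartheta\geq 1$ the functions $f'(v)=\vartheta(f(v))^{-1/2}$ and $1/f'(v)=\vartheta(f(v))^{1/2}$ belong to $C^{1}(\overline{\Omega})$ and are bounded below by a positive constant. Consequently $\varphi/f'(v),\,f'(v)\psi\in C_{0}^{1}(\Omega)\subset H_{0}^{1}(\Omega)$ are legitimate test functions, and the same boundedness makes the non-divergence term $\int_{\Omega}\vartheta'(u)|\nabla u|^{2}\varphi\,dx$ in the weak formulation of \eqref{SP} meaningful (note $u=f(v)\in C^{1}(\overline{\Omega})\subset H_{0}^{1}(\Omega)\cap L^{\infty}(\Omega)$). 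Granting these observations, the rest is the direct computation sketched above.
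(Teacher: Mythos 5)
Your proof is correct and follows essentially the same route as the paper: both arguments hinge on transforming test functions by the factor $1/f'(v)=(f^{-1})'(u)=\vartheta(u)^{1/2}$ and using the chain rule together with \eqref{ODE} to convert one weak formulation into the other. You are somewhat more explicit than the paper about the converse direction and the admissibility of the transformed test functions, but the underlying idea is identical.
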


\begin{proof}
Let $v\in C^{1}(\overline{\Omega})$ be a weak solution of \eqref{DP}. It is clear that $u=f(v)\in C^{1}(\overline{\Omega})$. Moreover,
$$
\nabla u=f'(v)\nabla v \ \mbox{and} \ \nabla v=\frac{1}{f'(v)}\nabla u=(f^{-1})'(u)\nabla u.
$$

Since $v$ is a weak solution of \eqref{DP} and $u=f(v)$, we have
\begin{equation}\label{weso}
\int_{\Omega}(f^{-1})'(u)\nabla u\nabla w dx=\int_{\Omega}\frac{p(u)}{(f^{-1})'(u)}w dx, \ \forall \ w\in H_{0}^{1}(\Omega).
\end{equation}
Since $v\in C^{1}(\overline{\Omega})$, for each $\varphi\in H_{0}^{1}(\Omega)$, we can choose $w=(f^{-1})'(u)\varphi=\vartheta(v)^{1/2}\varphi\in H_{0}^{1}(\Omega)$. Moreover, by \eqref{weso}
$$
\int_{\Omega}(f^{-1})'(u)(f^{-1})''(u)|\nabla u|^{2}\varphi dx+ \int_{\Omega}[(f^{-1})'(u)]^{2}\nabla u\nabla \varphi dx=\int_{\Omega}p(u)\varphi dx.
$$
By, \eqref{ODE},
$$
-\frac{1}{2}\int_{\Omega}\vartheta'(u)|\nabla u|^{2}\varphi dx+ \int_{\Omega}\vartheta(u)\nabla u\nabla \varphi dx=\int_{\Omega}p(u)\varphi dx.
$$
Integrating by parts, we conclude that
$$
\int_{\Omega}\left[-div( \vartheta(u)\nabla u)+(1/2)\vartheta'(u)|\nabla u|^{2}\right]\varphi dx=\int_{\Omega}p(u)\varphi dx, \ \forall \ \varphi\in H_{0}^{1}(\Omega). 
$$
Showing that $u$ is a weak solution of \eqref{SP}. The reverse is analogous.
\end{proof}

\medskip

In view of the previous proposition, along of the paper we will interested in studying the problem \eqref{DP}, which is known as the dual problem associated to \eqref{SP}.

\section{Existence and asymptotic behavior of solutions}\label{se:trivial}
In this section we are going to study the problem
\begin{equation}\label{P2}\tag{$P_{\lambda, q}$}
\left \{ \begin{array}{ll}
-div( \vartheta^{2}(u)\nabla u)+\vartheta(u)\vartheta'(u)|\nabla u|^{2}=\lambda |u|^{q-1}u & \mbox{in $\Omega$,}\\
u=0 & \mbox{on $\partial\Omega$,}
\end{array}\right.
\end{equation}
where $\Omega\subset\R^{N}$ is bounded smooth domain, $\lambda$ is a real parameter and $q>0$. As we have mentioned before, the dual problem associated to \eqref{P2} is
\begin{equation}\label{P2'}\tag{$P_{\lambda, q}'$}
\left \{ \begin{array}{ll}
-\Delta v=\lambda g(v) & \mbox{in $\Omega$,}\\
v=0 & \mbox{on $\partial\Omega$,}
\end{array}\right.
\end{equation}
where $g(s):= f'(s)|f(s)|^{q-1}f(s)$ and the function $f$ is the solution of \eqref{ODE}. 

Next lemma provides us some properties of the function $g$ which play an important role in the proof of our main results.

\begin{lemma}\label{main}
The function $g$ has the following properties:
\begin{enumerate}
\item[$(i)$] $\lim_{s\to 0}g(s)/s=\infty$, if $q\in (0, 1)$;

\item[$(ii)$] $\lim_{s\to 0}g(s)/s=1/\vartheta(0)$, if $q=1$;

\item[$(iii)$] $\lim_{s\to 0}g(s)/s=0$, if $q\in (1, \infty)$;

\item[$(iv)$] Suppose $(\vartheta_{1})-(\vartheta_{3})$ hold. Then, $\lim_{|s|\to \infty}g(s)/s=0$, if $q\in (0, 3)$;

\item[$(v)$] Suppose $(\vartheta_{1})-(\vartheta_{3})$ hold. Then, $\lim_{|s|\to \infty}g(s)/s=4/\alpha^{2}$, if $q=3$;

\item[$(vi)$] Suppose $(\vartheta_{1})-(\vartheta_{2})$ hold. Then, $\lim_{|s|\to \infty}g(s)/s=\infty$, if $q\in (3, \infty)$;

\item[$(vii)$] Suppose $(\vartheta_{1})-(\vartheta_{3})$ hold. Then, the map $s\mapsto g(s)/s$ is decreasing with regard $|s|$, for $q\in (0, 1]$;

\item[$(viii)$] Suppose $(\vartheta_{2})$ holds. Then, the map $s\mapsto g(s)/s$ is increasing with regard $|s|$, for $q\in [3, \infty)$.

\end{enumerate} 
\end{lemma}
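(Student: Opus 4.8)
The plan is to reduce everything to $s>0$: since $f$ is odd and $f'$ even, $g$ is odd, so $s\mapsto g(s)/s$ is even, and on $(0,\infty)$ one has $f(s)>0$ (Remark~\ref{rem1}) and $g(s)=f'(s)f(s)^{q}$; thus all the limits in $(i)$--$(vi)$ reduce to one-sided limits as $s\to0^{+}$ or $s\to+\infty$, and the monotonicity in $(vii)$--$(viii)$ need only be checked on $(0,\infty)$. For $(i)$--$(iii)$ I would write
\[
\frac{g(s)}{s}=f'(s)\left(\frac{f(s)}{s}\right)^{q}s^{q-1},
\]
and plug in Proposition~\ref{pp2}$(ii)$--$(iii)$: as $s\to0^{+}$ both $f'(s)$ and $f(s)/s$ tend to $\vartheta(0)^{-1/2}$, so $g(s)/s\sim\vartheta(0)^{-(q+1)/2}s^{q-1}$, which diverges if $q<1$, tends to $\vartheta(0)^{-1}$ if $q=1$, and vanishes if $q>1$.

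For $(iv)$--$(vi)$ the useful decomposition is
\[
\frac{g(s)}{s}=\bigl(f'(s)\sqrt{s}\,\bigr)\left(\frac{f(s)}{\sqrt{s}}\right)^{q}s^{(q-3)/2}.
\]
Under $(\vartheta_1)$--$(\vartheta_3)$, Proposition~\ref{pp2}$(vi)$ gives $f(s)/\sqrt{s}\to l:=(8/\alpha^{2})^{1/4}$, and writing $f'(s)\sqrt{s}=\bigl((s/f(s)^{2})\,(f(s)^{2}/\vartheta(f(s)))\bigr)^{1/2}$ and using $f(s)\to+\infty$ together with $(\vartheta_3)$ one gets $f'(s)\sqrt{s}\to m:=\sqrt{2}/(l\alpha)$, so that $ml=\sqrt{2}/\alpha$ and $l^{2}=2\sqrt{2}/\alpha$. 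Hence for $q\in(0,3)$ the factor $s^{(q-3)/2}$ tends to $0$, proving $(iv)$, while for $q=3$ it equals $1$ and $g(s)/s\to ml^{3}=(ml)l^{2}=(\sqrt2/\alpha)(2\sqrt2/\alpha)=4/\alpha^{2}$, proving $(v)$. For $(vi)$ only $(\vartheta_1)$--$(\vartheta_2)$ are at hand, so I would replace limits by one-sided bounds from Proposition~\ref{pp2}$(v)$: the map $s\mapsto f(s)/\sqrt{s}$ is nondecreasing, hence $\ge f(1)>0$ for $s\ge1$, and $f'(s)s\ge f(s)/2$ gives $f'(s)\sqrt{s}\ge f(s)/(2\sqrt{s})\ge f(1)/2$ for $s\ge1$; therefore $g(s)/s\ge\tfrac12 f(1)^{q+1}s^{(q-3)/2}\to+\infty$ when $q>3$.

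For $(vii)$--$(viii)$ I would examine the sign of the logarithmic derivative on $(0,\infty)$,
\[
\frac{d}{ds}\log\frac{g(s)}{s}=\frac{f''(s)}{f'(s)}+q\,\frac{f'(s)}{f(s)}-\frac1s,\qquad\frac{f''(s)}{f'(s)}=-\frac{\vartheta'(f(s))}{2\vartheta(f(s))^{3/2}},\quad\frac{f'(s)}{f(s)}=\frac{1}{f(s)\vartheta(f(s))^{1/2}},
\]
the identities coming from Proposition~\ref{pp2}$(i)$. For $(vii)$ ($q\in(0,1]$): by $(\vartheta_1)$ one has $\vartheta'(f(s))\ge0$, so the first term is $\le0$; by Proposition~\ref{pp2}$(v)$, $f'(s)s\le f(s)$, i.e. $1/s\ge f'(s)/f(s)$, so the remaining two terms sum to at most $(q-1)f'(s)/f(s)\le0$; hence $g(s)/s$ is decreasing in $s$. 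For $(viii)$ ($q\in[3,\infty)$): $(\vartheta_2)$, equivalently $\vartheta'(\sigma)/\vartheta(\sigma)\le2/\sigma$ for $\sigma>0$, gives $f''(s)/f'(s)\ge-f'(s)/f(s)$; moreover integrating $\vartheta(r)^{1/2}\ge(\vartheta(f(s))^{1/2}/f(s))\,r$ over $(0,f(s))$ yields $s=\int_{0}^{f(s)}\vartheta(r)^{1/2}\,dr\ge\tfrac12 f(s)\vartheta(f(s))^{1/2}$, i.e. $f'(s)/f(s)\ge1/(2s)$, the inequality being strict because $\vartheta\ge1$ rules out equality near $r=0$. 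Adding, the logarithmic derivative is $\ge(q-1)f'(s)/f(s)-1/s>\tfrac{q-1}{2s}-\tfrac1s\ge0$, so $g(s)/s$ is increasing in $s$.

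The parts $(i)$--$(v)$ are essentially algebraic rearrangements fed into Proposition~\ref{pp2}; the real difficulty is in $(vi)$ and $(viii)$, where $(\vartheta_3)$ is unavailable and one must extract effective one-sided estimates from $(\vartheta_1)$--$(\vartheta_2)$ (or $(\vartheta_2)$ alone) via the two-sided comparison $f(s)/2\le f'(s)s\le f(s)$ and the monotonicity of $f(s)/\sqrt{s}$ of Proposition~\ref{pp2}$(v)$. Securing the strict inequality required in $(viii)$ — which is exactly where $\vartheta\ge1$, rather than merely $\vartheta>0$, is used — is the step that needs the most care.
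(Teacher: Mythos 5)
Your proposal is correct and follows essentially the same route as the paper: reduce to $s>0$ by oddness, feed the asymptotics and the two-sided bound $|f(s)|/2\leq f'(s)|s|\leq |f(s)|$ from Proposition~\ref{pp2} into the factorizations of $g(s)/s$ for $(i)$--$(vi)$, and decide $(vii)$--$(viii)$ by the sign of the derivative of $g(s)/s$ (the paper via the quotient rule, you via the logarithmic derivative, which is only a cosmetic difference). Your only real deviations — using the exact limits of $f(s)/\sqrt{s}$ and $f'(s)\sqrt{s}$ in $(iv)$ instead of the paper's upper-bound squeeze, and re-deriving $f'(s)/f(s)\geq 1/(2s)$ with strictness directly from $(\vartheta_2)$ in $(viii)$ rather than citing Proposition~\ref{pp2}$(v)$ — are refinements of the same argument, not a different one.
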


\begin{proof}
Items $(i)$, $(ii)$ and $(iii)$ are straightforward consequences of Proposition \ref{pp2}$(ii)-(iii)$, because in these cases
$$
\lim_{s\to 0}\frac{g(s)}{s}=\lim_{s\to 0}f'(s)\left(\frac{|f(s)|}{|s|}\right)^{q-1}\frac{f(s)}{s}|s|^{q-1}=\left \{ \begin{array}{ll}
\infty & \mbox{if $q\in (0, 1)$,}\\
1/\vartheta(0) & \mbox{if $q=1$,}\\
0 & \mbox{if $q\in (1, \infty)$.}
\end{array}\right.
$$

$(iv)$ Observe that by $(v)$ and $(vi)$ of Proposition \ref{pp2}, we obtain
$$
\frac{g(s)}{s}=\frac{f'(s)|f(s)|^{q}}{|s|}\leq\frac{|f(s)|^{q+1}}{s^{2}}\leq \left(\frac{8}{\alpha^{2}}\right)^{(q+1)/4}\frac{1}{|s|^{(3-q)/2}}, 
$$
for all $s\in\R\backslash\{0\}$ and $q\in (0, 3)$. Thus, 
$$
\lim_{|s|\to\infty}\frac{g(s)}{s}=0, 
$$
for $q\in (0, 3)$. 
Therefore, item $(iv)$ follows. 

$(v)$ It is sufficient to note that
$$
\lim_{|s|\to\infty}\frac{g(s)}{s}=\lim_{|s|\to\infty}\left(\frac{|f(s)|}{\sqrt{|s|}}\right)^{2}\times\lim_{|s|\to\infty}\frac{|f(s)|}{\vartheta(f(s))^{1/2}}=\left(\frac{8}{\alpha^{2}}\right)^{1/2}\times \frac{\sqrt{2}}{\alpha}=\frac{4}{\alpha^{2}}.
$$

$(vi)$ It is a consequence of Proposition \eqref{pp2}$(v)$ and of the inequality
$$
\frac{g(s)}{s}=\frac{f'(s)|f(s)|^{q}}{|s|}\geq \frac{|f(s)|^{q+1}}{2s^{2}}\geq \frac{f(1)^{q+1}}{2}|s|^{(q-3)/2}, \ \forall \ |s|\geq 1.
$$

$(vii)$ Since $g$ is odd, it sufficient to consider case $s>0$. Since,
$$
\left(\frac{g(s)}{s}\right)'=\frac{f''(s)f(s)^{q}s+qf(s)^{q-1}f'(s)^{2}s-f'(s)f(s)^{q}}{s^{2}},
$$
it follows from Proposition \ref{pp2}$(v)$ that, for $q\in (0, 1]$,
$$
\left(\frac{g(s)}{s}\right)'<\frac{f'(s)f(s)^{q-1}}{s^{2}}\left(qf'(s)s-f(s)\right)\leq 0, \ \forall \ s\in (0, \infty).
$$

$(viii)$ Again, let us consider just case $s>0$. Note that 
\begin{eqnarray*}
\left(\frac{g(s)}{s}\right)'&=&\left(qf(s)^{q-1}f'(s)s-f(s)^{q}-\frac{\vartheta'(f(s))f(s)^{q}s}{2\vartheta(f(s))^{3/2}}\right)\frac{1}{s^{2}\vartheta(f(s))^{1/2}}.
\end{eqnarray*}
On the other hand, by $(\vartheta_{2})$ and Proposition \ref{pp2}$(v)$, for each $q\in [3, \infty)$, we have
\begin{eqnarray*}
qf(s)^{q-1}f'(s)s-f(s)^{q}-\frac{\vartheta'(f(s))f(s)^{q}s}{2\vartheta(f(s))^{3/2}}&\geq& qf(s)^{q-1}f'(s)s-f(s)^{q}-f(s)^{q-1}f'(s)s\\
&=&f(s)^{q-1}\left((q-1)f'(s)s-f(s)\right)\\
&\geq& f(s)^{q-1}\left(2f'(s)s-f(s)\right)>0,
\end{eqnarray*}
for all $s\in (0, \infty)$. Therefore, $s\mapsto g(s)/s$ is increasing in $(0,\infty)$ for all $q\in [3, \infty)$.
\end{proof}

Before proving the existence results we state the following proposition, which justifies why, then, we are going to consider positive values for the parameter $\lambda$.

\begin{proposition}
If $\lambda\leq 0$, then problem \eqref{P2} has no nontrivial solution.
\end{proposition}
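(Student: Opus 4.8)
The plan is to work with the dual problem \eqref{P2'}, which by Proposition \ref{change} is equivalent to \eqref{P2}, and show that $-\Delta v=\lambda g(v)$ in $\Omega$, $v=0$ on $\partial\Omega$, admits only the trivial solution when $\lambda\le0$. The natural tool is to test the equation against $v$ itself: if $v\in H_0^1(\Omega)$ is a weak solution, then
$$
\int_\Omega |\nabla v|^2\,dx=\lambda\int_\Omega g(v)v\,dx.
$$
The left-hand side is nonnegative, so it suffices to check that the integrand $g(v)v$ is nonnegative pointwise, which forces $\int_\Omega|\nabla v|^2=0$ and hence $v\equiv0$ by the Poincar\'e inequality. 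When $\lambda=0$ the conclusion is immediate from the same identity.

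The key computational step is therefore the sign of $g(s)s$. By definition $g(s)=f'(s)|f(s)|^{q-1}f(s)$, so $g(s)s=f'(s)|f(s)|^{q-1}f(s)\,s$. Proposition \ref{pp2}$(i)$–$(ii)$ gives $f'(s)>0$ for all $s$, and Remark \ref{rem1} (together with $f(0)=0$ and monotonicity) gives that $f(s)$ has the same sign as $s$; consequently $f(s)s\ge0$ and $|f(s)|^{q-1}\ge0$, so $g(s)s\ge0$ for every $s\in\R$ and every $q>0$. Plugging this into the energy identity with $\lambda\le0$ yields $\int_\Omega|\nabla v|^2\,dx\le0$, hence $\nabla v=0$ a.e., hence $v=0$; then $u=f(v)=f(0)=0$, i.e. \eqref{P2} has no nontrivial solution.

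There is no serious obstacle here — the statement is essentially a sign argument. The only point requiring a word of care is the regularity/integrability needed to justify taking $v$ as a test function and to conclude $u=f(v)$ is the corresponding (trivial) solution of \eqref{P2}; this is covered by the $C^1(\overline\Omega)$ framework already fixed in Proposition \ref{change} and by standard elliptic regularity for \eqref{P2'} (the nonlinearity $\lambda g(v)$ is continuous and, by Proposition \ref{pp2}$(iv)$, sub-linearly bounded on bounded sets, so bootstrapping applies), so a weak solution is in fact classical and the test-function computation is legitimate. I would state the proof in two lines: multiply \eqref{P2'} by $v$, integrate, invoke $g(s)s\ge0$, and conclude $v\equiv0$, then transfer back to \eqref{P2} via Proposition \ref{change}.
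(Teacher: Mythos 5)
Your argument is correct and is essentially the paper's own proof: pass to the dual problem via Proposition \ref{change}, test with $v$ to get $0\le\|v\|^{2}=\lambda\int_{\Omega}g(v)v\,dx\le 0$ using the sign condition $g(s)s=f'(s)|f(s)|^{q-1}f(s)s\ge 0$, and conclude $v\equiv 0$, hence $u=f(v)\equiv 0$. The paper's proof is just the two-line version you describe at the end, so there is nothing to add.
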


\begin{proof}
Let $u$ a solution of $\eqref{P2}$ with $\lambda\leq 0$. Then, by Proposition \ref{change}, $v=f^{-1}(u)$ is a solution of $\eqref{P2'}$. Thus,
$$
0\leq \|v\|^{2}=\lambda\int_{\Omega}g(v)v dx=\lambda\int_{\Omega}f'(v)|f(v)|^{q-1}f(v)v dx\leq 0.
$$
Showing that $v=0$ and, consequently, $u=0$.
\end{proof}

Now on, even if nothing is said, we are considering $\lambda>0$.



\subsection{Case $0<q<1$}.\\

We are ready to prove the main results of this subsection.

\medskip

{\bf Proof of Theorem \ref{teor1}(a):}

\medskip

For each $\lambda>0$, by Lemma \ref{main}$(i)$, there exists $\varepsilon>0$ such that
\begin{equation}\label{sub}
\frac{\lambda_{1}}{\lambda}\leq \frac{g(\varepsilon \varphi_{1})}{\varepsilon \varphi_{1}}.
\end{equation} 
Choosing $\underline{v}:=\varepsilon \varphi_{1}$, it follows from
\begin{equation}\label{sub'}
-\Delta\underline{v}=\varepsilon\lambda_{1}\varphi_{1}<\lambda g(\varepsilon\varphi_{1})=\lambda g(\underline{v}) \ \mbox{in} \ \Omega,
\end{equation}
that $\underline{v}$ is a sub-solution of \eqref{P2'}. On the other hand, choosing $\overline{v}:=Ke$ where $e$ is defined in (\ref{prob}) with $K$ being a positive constant, which is large enough, then $\overline{v}$ is a super-solution. Indeed, by Lemma \ref{main}$(iv)$, there exists $K>0$ large enough such that
$$
e_{M}\frac{g(K e_{L})}{K e_{L}}\leq\frac{1}{\lambda}.
$$
Thus,
\begin{equation}\label{sup}
-\Delta\overline{v}=K\geq\lambda Ke_{M}\frac{g(K e_{L})}{K e_{L}}\geq \lambda g(\overline{v}) \ \mbox{in} \ \Omega.
\end{equation}
Showing that $\overline{v}$ is a super-solution of \eqref{P2'}. Choosing $\varepsilon$ smaller and $K$ greater, if it is necessary, we can assume that $\underline{v}\leq\overline{v}$. 
Consequently, we conclude the existence of a positive classical solution $v_{\lambda}$ of \eqref{P2'} such that
\begin{equation}\label{ordered}
\underline{v}\leq v_{\lambda}\leq \overline{v}.
\end{equation}

The uniqueness of positive solution follows from Lemma \ref{main}$(vii)$ and \cite{BO}. Finally, if $\lambda\to 0$, we can choose $K=K(\lambda)\to 0$ in \eqref{sup} to conclude (by \eqref{ordered}) that
$$
\lim_{\lambda\to 0}|v_{\lambda}|_{\infty}=0.
$$
In the same way, if $\lambda\to\infty$, we can choose $\varepsilon=\varepsilon(\lambda)\to \infty$ in \eqref{sub} to obtain (by \eqref{ordered} again)
$$
\lim_{\lambda\to \infty}|v_{\lambda}|_{\infty}=\infty.
$$
$\square$

\medskip

{\bf Proof of Theorem \ref{teor1}(b):}

\medskip

Let $I:H_{0}^{1}(\Omega)\to\R$ be defined by
$$
I(v)=\frac{1}{2}\|v\|^{2}-\lambda\int_{\Omega}G(v)dx
$$
the energy functional of \eqref{P2'}, where $G(s)=\int_{0}^{s}g(t)dt=[1/(q+1)]|f(s)|^{q+1}$. Since $f\in C^{2}(\R)$, it follows that $I\in C^{1}(H_{0}^{1}(\Omega), \R)$ and
$$
I'(v)w=\int_{\Omega}\nabla v\nabla w dx-\lambda\int_{\Omega} g(v)w dx, \ \forall \ v, w\in H_{0}^{1}(\Omega).
$$
Clearly $I$ is even and $I(0)=0$. Moreover, $I$ is coercive and bounded from below, because by item $(iv)$ of Proposition \ref{pp2}, we have
$$
I(v)\geq \left(\frac{1}{2}\|v\|^{1-q}-C\lambda\right)\|v\|^{q+1}.
$$
Furthermore, $I$ satisfies the Palais-Smale condition in any level $c\in\R$. Indeed, if $I(v_{n})\to c$ and $I'(v_{n})\to 0$ in $\vartheta^{-1}(\Omega)$, then, by coercivity of $I$, it follows that $\{\|v_{n}\|\}$ is bounded. Thence, there exists $v_{0}\in H_{0}^{1}(\Omega)$ such that
\begin{equation}\label{agora}
v_{n}\rightharpoonup v_{0} \ \mbox{in $H_{0}^{1}(\Omega)$}
\end{equation}
and
\begin{equation}\label{sim}
v_{n}\to v_{0} \ \mbox{in $L^{s}(\Omega)$}, \ \forall \ s\in [1, 2^{\ast}).
\end{equation}

Since $\{\|v_{n}\|\}$ is bounded, we can use \eqref{sim} and compact embedding to obtain
$$
o_{n}(1)=I'(v_{n})v_{n}=\frac{1}{2}\|v_{n}\|^{2}+\lambda\int_{\Omega}g(v_{0})v_{0}dx+o_{n}(1).
$$
Proving that
\begin{equation}\label{aca}
\|v_{n}\|^{2}=-2\lambda\int_{\Omega}g(v_{0})v_{0}dx+o_{n}(1).
\end{equation} 

In the same way, by using \eqref{agora} and compact embedding, we get
$$
o_{n}(1)=I'(v_{n})v_{0}=\frac{1}{2}\|v_{0}\|^{2}+\lambda\int_{\Omega}g(v_{0})v_{0}dx+o_{n}(1).
$$
Showing that
\begin{equation}\label{bou}
\|v_{0}\|^{2}=-2\lambda\int_{\Omega}g(v_{0})v_{0}dx.
\end{equation}

Comparing \eqref{aca} and \eqref{bou}, we conclude that
$$
\|v_{n}\|^{2}\to\|v_{0}\|^{2}
$$
and, consequently, $v_{n}\to v_{0}$ in $H_{0}^{1}(\Omega)$. Thus, $I$ satisfies the Palais-Smale condition. 

\medskip

Finally, by Proposition \ref{pp2}$(iii)$, there exists a positive constant $C$ such that
$$
|f(s)|\geq C|s|, \ \forall \ |s|\leq 1.
$$
Thus, for each $k\in\N$, let $X_{k}=Span\{f_{1}, \ldots, f_{k}\}$ be a $k$-dimensional subspace of $H_{0}^{1}(\Omega)\cap L^{\infty}(\Omega)$ such that $f_{1}, \ldots, f_{k}$ are two-by-two orthogonals in $H_{0}^{1}(\Omega)$ and $|f_{i}|_{\infty}\leq 1$, for all $i\in \{1, \ldots, k\}$. Clearly, by choosing $0<\rho_{k}<\min_{1\leq i\leq k}\|f_{i}\|/k$, it follows that if $v\in X_{k}$ and $\|v\|=1$, then $|\rho_{k}v|_{\infty}\leq 1$. Thence,

\begin{eqnarray*}
I(\rho_{k} v)&=&\frac{\rho^{2}_{k}}{2}-\lambda\int_{\Omega}G(\rho_{k} v)dx\\
&\leq & \frac{\rho^{2}_{k}}{2}-\frac{\lambda C_{1}\rho^{q+1}_{k}}{q+1}\int_{\Omega}|v|^{q+1}dx.
\end{eqnarray*}
Since $X_k$ is a finite dimensional subspace, there exists $C_{k}>0$ such that
$$
C_{k}\|v\|^{q+1}\leq \int_{\Omega}|v|^{q+1}dx, \ \forall \ v\in X_{k},
$$
we get
\begin{equation}\label{bom}
I(\rho_{k} v)\leq \frac{1}{2}\rho^{2}_{k}-\frac{\lambda C_{1}C_{k}}{q+1}\rho^{q+1}_{k}.
\end{equation}
Since $q\in (0, 1)$, we can choose $\rho_{k}>0$ even lower in \eqref{bom} to conclude that
$$
\sup_{v\in S_{\rho_{k}}\cap X_{k}}I(v)<0,
$$
where $S_{\rho_{k}}=\{v\in H_{0}^{1}(\Omega): \|v\|=\rho_{k}\}$. The result follows now by Theorem 1 in \cite{Ka}.
$\square$


\subsection{Case $q=1$}.\\

{\bf Proof of Theorem \ref{teor2}:}

\medskip

Let $v$ a positive solution of \eqref{P2'}. Then,
$$
0=\mu_{1}\left(-\Delta-\lambda \frac{g(v)}{v}\right),
$$
where $\mu_{1}\left(-\Delta-\lambda g(v)/v\right)$ is the principal eigenvalue of the problem
\begin{equation}
\left \{ \begin{array}{ll}
-\Delta v-\lambda g(v)=\mu v & \mbox{in $\Omega$,}\\
v=0 & \mbox{on $\partial\Omega$.}
\end{array}\right.
\end{equation}
It follows from $(ii)$ and $(vii)$ in Lemma \ref{main} that 
$$
\frac{g(s)}{s}<\frac{1}{\vartheta(0)}, \ \forall \ s>0.
$$
Thereby,
$$
0=\mu_{1}\left(-\Delta-\lambda \frac{g(v)}{v}\right)>\mu_{1}\left(-\Delta-\frac{\lambda}{\vartheta(0)}\right)=\lambda_{1}-\frac{\lambda}{\vartheta(0)},
$$
Therefore, if there exists positive solution of \eqref{P2'}, then $\lambda>\vartheta(0)\lambda_{1}$.

If $\lambda>\vartheta(0)\lambda_{1}$, we can use $(ii)$, $(iv)$ and $(vii)$ in Lemma \ref{main} and to argue exactly in the same way as in the proof of Theorem \ref{teor1} to prove that $\underline{v}:=\varepsilon \varphi_{1}$, with $\varepsilon$ small enough, is a sub-solution, $\overline{v}:=Ke$, with $K$ large enough, is a super-solution and \eqref{P2'} admits a unique solution $v_{\lambda}$, which satisfies
\begin{equation}\label{inn}
\varepsilon\varphi_{1}\leq v_{\lambda}\leq Ke \ \mbox{in} \ \Omega.
\end{equation}

Then, $u_{\lambda}=f^{-1}(v_{\lambda})$ is the unique solution of \eqref{P2}. Finally, we can choose, in \eqref{sub}, $\varepsilon(\lambda)$ such that $\varepsilon(\lambda)\to\infty$ as $\lambda\to\infty$, thus
$$
v_{\lambda}(x)\geq \varepsilon(\lambda)\varphi_{1}(x)\to\infty.
$$
Since the inverse $f^{-1}$ is an increasing diffeomorphism in $\R$, see Proposition \ref{pp2}$(i)$, we conclude also that 
$$
u_{\lambda}(x)\to\infty
$$
and, therefore, $|u_{\lambda}|_{\infty}\to\infty$ as $\lambda\to\infty$. 

Finally, observe that by (\ref{inn}) $v_\l$ is bounded in $L^\infty(\O)$ as $\lambda\to\vartheta(0)\lambda_{1}$. By the elliptic regularity and a bootstrapping-argument, we can conclude that  $v_\l$ is bounded in $C^{2,\alpha}(\overline\O)$, $\alpha\in (0,1)$. Then,  $v_\l\to v_0\geq 0$ in $C^2(\ov\O)$ as $\l\to\vartheta(0)\lambda_{1}$. Since we have proved that the unique solution for $\l=\vartheta(0)\lambda_{1}$ is the trivial one, we get that $v_0\equiv 0$ in $\O$. 
$\square$


\subsection{Case $1<q<3$}.\\

Next lemmas will be used in the proof of our main results for $q\in (1, 3)$.

\begin{lemma}\label{bounde}
Suppose $(\vartheta_{1})-(\vartheta_{3})$ hold and $q\in (1, 3)$. If there exists a positive solution $v_{\lambda}$ of \eqref{P2'}, then
\begin{equation}\label{add1}
v_{\lambda}\leq C\psi,
\end{equation}
where 
\begin{equation}\label{add2}
C=\lambda^{2/(3-q)}\left(\frac{8}{\alpha^{2}}\right)^{(q+1)/2(3-q)}
\end{equation}
and $\psi$ is the unique solution of the problem 
\begin{equation}
\label{new}
\left \{ \begin{array}{ll}
-\Delta w=w^{(q-1)/2} & \mbox{in $\Omega$,}\\
w>0 & \mbox{in $\Omega$,}\\
w=0 & \mbox{on $\partial\Omega$.}
\end{array}\right.
\end{equation}
\end{lemma}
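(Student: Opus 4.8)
The plan is to produce an explicit supersolution of \eqref{P2'} by rescaling the solution $\psi$ of the auxiliary problem \eqref{new}, using the key growth estimate on $f$ from Proposition \ref{pp2}. First I would record the pointwise bound on the nonlinearity of \eqref{P2'}: since $g(s)=f'(s)|f(s)|^{q-1}f(s)$ and $0<f'(s)\le 1$ by Proposition \ref{pp2}$(ii)$, for $s>0$ we have $g(s)\le |f(s)|^{q}$. Combining this with Proposition \ref{pp2}$(v)$–$(vi)$, which gives $|f(s)|\le (8/\alpha^2)^{1/4}\sqrt{|s|}$ for all $s\in\R$ (the map $|f(s)|/\sqrt{|s|}$ is nondecreasing on $(0,\infty)$ and tends to $(8/\alpha^2)^{1/4}$), we obtain
$$
\lambda g(s)\le \lambda\left(\frac{8}{\alpha^{2}}\right)^{q/4}|s|^{q/2}, \qquad s\ge 0.
$$
So the right-hand side of \eqref{P2'} is controlled by a pure power $s^{q/2}$ with exponent $q/2\in(1/2,3/2)$.

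Next I would set $\overline v := C\psi$ with $C$ as in \eqref{add2} and check that $\overline v$ is a supersolution of \eqref{P2'}, i.e. $-\Delta \overline v \ge \lambda g(\overline v)$ in $\Omega$. Using $-\Delta\psi=\psi^{(q-1)/2}$ and the bound above,
$$
-\Delta\overline v = C\,\psi^{(q-1)/2}, \qquad
\lambda g(\overline v)\le \lambda\left(\frac{8}{\alpha^2}\right)^{q/4}(C\psi)^{q/2}
= \lambda\left(\frac{8}{\alpha^2}\right)^{q/4}C^{q/2}\psi^{q/2}.
$$
Hmm — the powers of $\psi$ on the two sides are $(q-1)/2$ and $q/2$, which do not match, so a direct comparison of the two expressions as they stand does not close. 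I would instead be more careful and exploit the sharper form of Proposition \ref{pp2}$(v)$: since $f'(s)|s|\le |f(s)|$ we actually get $g(s)=\dfrac{f'(s)|s|\,|f(s)|^{q-1}f(s)}{|s|}\le \dfrac{|f(s)|^{q+1}}{|s|}\le \left(\dfrac{8}{\alpha^2}\right)^{(q+1)/4}|s|^{(q+1)/2-1}=\left(\dfrac{8}{\alpha^2}\right)^{(q+1)/4}|s|^{(q-1)/2}$, where I used $|f(s)|^{q+1}\le (8/\alpha^2)^{(q+1)/4}|s|^{(q+1)/2}$. This is the bound actually used in Lemma \ref{main}$(iv)$ in the excerpt. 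Now the exponents match:
$$
\lambda g(\overline v)\le \lambda\left(\frac{8}{\alpha^2}\right)^{(q+1)/4}C^{(q-1)/2}\psi^{(q-1)/2},
$$
so $\overline v=C\psi$ is a supersolution as soon as $C\ge \lambda\left(8/\alpha^2\right)^{(q+1)/4}C^{(q-1)/2}$, i.e. $C^{(3-q)/2}\ge \lambda(8/\alpha^2)^{(q+1)/4}$, i.e. $C\ge \lambda^{2/(3-q)}(8/\alpha^2)^{(q+1)/(2(3-q))}$, which is exactly the value \eqref{add2}. (Here $3-q>0$ since $q\in(1,3)$, so the exponent $2/(3-q)$ is positive and the inequality orients correctly.)

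Finally I would conclude by a comparison argument: $\underline v:=0$ is a subsolution of \eqref{P2'} (since $g(0)=0$), $\overline v=C\psi>0$ is the supersolution just constructed, and $v_\lambda$ is a positive solution; since $g$ is a $C^1$ function with $g(s)/s$ bounded near the relevant range (no singularity, as $s\mapsto g(s)/s\to 0$ as $s\to 0$ for $q>1$ by Lemma \ref{main}$(iii)$), the standard weak-comparison principle for $-\Delta v=\lambda g(v)$ with ordered super/subsolutions — or, more directly, testing the equation for $v_\lambda-C\psi$ on its positive part and using the pointwise inequality $\lambda g(v_\lambda)\le -\Delta(C\psi)$ on the set $\{v_\lambda>C\psi\}$ together with monotonicity — yields $v_\lambda\le C\psi$ in $\Omega$, which is \eqref{add1}. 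The main obstacle is getting the exponents to match in the supersolution inequality, which forces the use of the sharp two-sided estimate in Proposition \ref{pp2}$(v)$ (not merely $f'\le 1$ and $|f(s)|\le|s|$) and the asymptotic constant $(8/\alpha^2)^{1/4}$ from Proposition \ref{pp2}$(vi)$; once that is in place the choice of $C$ in \eqref{add2} is forced and the comparison is routine. I should also note that well-posedness of \eqref{new} (existence, uniqueness, positivity of $\psi$) is classical since $(q-1)/2\in(0,1)$, a sublinear power, so \eqref{new} is of Ambrosetti–Brezis–Cerami type.
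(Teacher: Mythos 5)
Your growth estimate $\lambda g(s)\le \lambda\bigl(8/\alpha^{2}\bigr)^{(q+1)/4}s^{(q-1)/2}$ (obtained from Proposition \ref{pp2}$(v)$--$(vi)$, exactly as in the paper) and the resulting identification of the constant $C$ in \eqref{add2} are correct and coincide with the paper's computation. The genuine gap is in your final comparison step. For $q\in(1,3)$ the map $s\mapsto g(s)/s$ is not monotone (Lemma \ref{main} gives monotonicity only for $q\le 1$ and $q\ge 3$, and indeed Theorem \ref{teor3} produces two ordered positive solutions, which is incompatible with a Brezis--Oswald type structure), so there is no ``standard weak comparison principle'' guaranteeing that an arbitrary positive solution of \eqref{P2'} lies below an arbitrary positive supersolution of \eqref{P2'}; the sub-supersolution method only yields existence of \emph{some} solution between an ordered pair. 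Your ``more direct'' test-function argument is circular: on the set $\{v_{\lambda}>C\psi\}$ the bound you have is $\lambda g(v_{\lambda})\le C^{(3-q)/2}v_{\lambda}^{(q-1)/2}$, and since $(q-1)/2>0$ the inequality $C^{(3-q)/2}v_{\lambda}^{(q-1)/2}\le C\psi^{(q-1)/2}=-\Delta(C\psi)$ is \emph{equivalent} to $v_{\lambda}\le C\psi$ on that set, i.e. to the very conclusion you are after; the nonlinearity is increasing there, so testing $(v_{\lambda}-C\psi)^{+}$ does not close.

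The paper sidesteps this by never comparing within \eqref{P2'}: it reads the growth estimate as saying that $v_{\lambda}$ is a \emph{subsolution of the auxiliary concave problem} $-\Delta w=\lambda\bigl(8/\alpha^{2}\bigr)^{(q+1)/4}w^{(q-1)/2}$, whose unique positive solution is precisely $C\psi$ (your scaling computation), and then invokes Lemma 3.3 of \cite{ABC}, the comparison principle valid for nonlinearities $h$ with $h(s)/s$ decreasing, to conclude $v_{\lambda}\le C\psi$. That lemma uses the strict decrease of $s\mapsto s^{(q-1)/2}/s=s^{(q-3)/2}$ (true since $q<3$), not any property of $g$, and its proof is not a naive maximum-principle test but a D\'iaz--Saa/Brezis--Oswald type argument with test functions of the form $\bigl(v_{\lambda}^{2}-(C\psi)^{2}\bigr)^{+}/v_{\lambda}$ and $\bigl(v_{\lambda}^{2}-(C\psi)^{2}\bigr)^{+}/(C\psi)$. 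Your supersolution computation already contains all the needed ingredients; replacing your last step by this concave-comparison lemma (or reproducing its proof) repairs the argument.
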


\begin{proof}
Firstly, observe that since $q<3$ then $(q-1)/2<1$, and hence (\ref{new}) possesses a unique positive solution.

By items $(v)$ and $(vi)$ of Proposition \ref{pp2}, we have
\begin{equation}\label{ineq10}
|f(s)|\leq \left(\frac{8}{\alpha^{2}}\right)^{1/4}\sqrt{|s|}, \ \forall s\in\R.
\end{equation}
Consequently, from Proposition \ref{pp2}$(v)$ and \eqref{ineq10}, if $v_{\lambda}$ is a positive solution of \eqref{P2'}, then
$$
-\Delta v_{\lambda}\leq \lambda \left(\frac{8}{\alpha^{2}}\right)^{(q+1)/4} v_{\lambda}^{(q-1)/2}.
$$
Thus, $v_{\lambda}$ is a sub-solution of the problem
\begin{equation}
\left \{ \begin{array}{ll}
-\Delta w=\lambda (8/\alpha^{2})^{(q+1)/4} w^{(q-1)/2} & \mbox{in $\Omega$,}\\
w>0 & \mbox{in $\Omega$,}\\
w=0 & \mbox{on $\partial\Omega$,}
\end{array}\right.
\end{equation}
which has a unique solution 
$$
\overline{w}=\lambda^{2/(3-q)} (8/\alpha^{2})^{(q+1)/2(3-q)}\psi,
$$ 
because $q\in (0, 3)$. It follows from Lemma 3.3 in \cite{ABC} that
$$
v_{\lambda}\leq C\psi,
$$
where
$$
C=\lambda^{2/(3-q)} (8/\alpha^{2})^{(q+1)/2(3-q)}.
$$
\end{proof}

\begin{lemma}\label{exis1}
Suppose $(\vartheta_{1})-(\vartheta_{3})$ hold and $q\in (1, 3)$. Then, there exists $\overline{\lambda}>0$ such that $\eqref{P2}$ has a positive solution, for all $\lambda\geq\overline{\lambda}$.
\end{lemma}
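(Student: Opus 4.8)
The plan is to use Proposition \ref{change} to turn the existence of a positive solution of \eqref{P2} into that of its dual problem \eqref{P2'}, i.e. $-\Delta v=\lambda g(v)$ with $g(s)=f'(s)|f(s)|^{q-1}f(s)$, and then to produce a positive solution of \eqref{P2'} by minimizing a truncated energy. Set $g_{+}(s):=g(s)$ for $s\geq 0$, $g_{+}(s):=0$ for $s<0$, $G_{+}(s):=\int_{0}^{s}g_{+}(t)\,dt$ (so that $G_{+}(s)=\frac{1}{q+1}|f(s)|^{q+1}$ for $s\geq 0$ and $G_{+}\equiv 0$ on $(-\infty,0]$), and consider
$$
J_{\lambda}(v)=\frac{1}{2}\|v\|^{2}-\lambda\int_{\Omega}G_{+}(v)\,dx,\qquad v\in H_{0}^{1}(\Omega).
$$
Since $f\in C^{2}(\R)$ and $q>1$, $g_{+}$ is continuous with subcritical growth and $J_{\lambda}\in C^{1}(H_{0}^{1}(\Omega),\R)$; moreover $J_{\lambda}(v)=J_{\lambda}(v^{+})+\frac{1}{2}\|v^{-}\|^{2}\geq J_{\lambda}(v^{+})$, so any minimizer of $J_{\lambda}$ is nonnegative.

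I would establish three facts. First, $J_{\lambda}$ is coercive and bounded from below: this is where $q<3$ enters, since by Proposition \ref{pp2}$(v)$--$(vi)$ one has $|f(s)|\leq(8/\alpha^{2})^{1/4}\sqrt{|s|}$, hence $G_{+}(s)\leq\frac{1}{q+1}(8/\alpha^{2})^{(q+1)/4}|s|^{(q+1)/2}$ with $1\leq(q+1)/2<2$, so that, by Poincar\'e's inequality, $\int_{\Omega}G_{+}(v)\leq C\|v\|^{(q+1)/2}$ and thus $J_{\lambda}(v)\geq\frac{1}{2}\|v\|^{2}-\lambda C\|v\|^{(q+1)/2}\to+\infty$ as $\|v\|\to\infty$. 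Second, $J_{\lambda}$ satisfies the Palais--Smale condition at every level: the argument is the one already used for Theorem \ref{teor1}$(b)$ --- coercivity makes a $(PS)$ sequence bounded, and the compact embeddings $H_{0}^{1}(\Omega)\hookrightarrow L^{s}(\Omega)$, $s<2^{\ast}$, together with the subcritical (indeed sublinear) growth of $g_{+}$ upgrade weak to strong convergence. Hence $J_{\lambda}$ attains its infimum at some $v_{\lambda}\geq 0$.

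Third, $\inf J_{\lambda}<0$ once $\lambda$ is large. Testing at the normalized first eigenfunction $\varphi_{1}>0$ one has $\int_{\Omega}G_{+}(\varphi_{1})=\frac{1}{q+1}\int_{\Omega}f(\varphi_{1})^{q+1}>0$, so for every $\lambda\geq\overline{\lambda}:=\|\varphi_{1}\|^{2}\big/\int_{\Omega}G_{+}(\varphi_{1})$ we get $J_{\lambda}(\varphi_{1})\leq-\frac{1}{2}\|\varphi_{1}\|^{2}<0=J_{\lambda}(0)$. Thus the minimizer $v_{\lambda}$ is nontrivial; being a critical point of $J_{\lambda}$ it satisfies $-\Delta v_{\lambda}=\lambda g_{+}(v_{\lambda})=\lambda g(v_{\lambda})$ in $\Omega$ (the truncation being invisible on $\{v_{\lambda}\geq 0\}$). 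Elliptic regularity makes $v_{\lambda}$ classical, and since $-\Delta v_{\lambda}=\lambda g(v_{\lambda})\geq 0$ with $v_{\lambda}\not\equiv 0$, the strong maximum principle yields $v_{\lambda}>0$ in $\Omega$. Then $u_{\lambda}=f(v_{\lambda})>0$ solves \eqref{P2} by Proposition \ref{change}, for all $\lambda\geq\overline{\lambda}$.

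The delicate point is precisely the nontriviality of the minimizer, and this is what forces ``$\lambda$ large'': because $g(s)/s\to 0$ as $s\to 0$ and as $s\to\infty$ (Lemma \ref{main}$(iii)$--$(iv)$), the origin is a strict local minimum of $J_{\lambda}$ and --- unlike the cases $q\leq 1$ --- no positive subsolution vanishing on $\partial\Omega$ can dominate the sublinear behaviour of $g$ near its zero set, so a plain monotone-iteration argument cannot be started from below; testing at $\varphi_{1}$ supplies a configuration of negative energy exactly when $\lambda$ exceeds the above threshold. Note that Lemma \ref{bounde} is not needed for this existence statement; it will be used afterwards, to control the resulting branch of positive solutions.
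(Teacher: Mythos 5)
Your argument is correct, but it follows a genuinely different route from the paper. The paper proves Lemma \ref{exis1} by the sub--supersolution method: it takes $\underline{v}=\varphi_{1}^{r}$ with $r>1$ as a subsolution (using Hopf's lemma, $\partial\varphi_{1}/\partial\eta<0$, to handle a boundary strip where $(1-r)\varphi_{1}^{-2}|\nabla\varphi_{1}|^{2}+\lambda_{1}\leq 0$, and the largeness of $\lambda$ only on the interior part), and $\overline{v}=Ke$ with $K$ large as a supersolution via Lemma \ref{main}$(iv)$; the solution is then obtained between these two ordered barriers. You instead minimize a truncated energy globally: coercivity comes from $|f(s)|\leq(8/\alpha^{2})^{1/4}\sqrt{|s|}$ (Proposition \ref{pp2}$(v)$--$(vi)$), so $G_{+}(s)\leq C|s|^{(q+1)/2}$ with $(q+1)/2<2$, negativity of the infimum comes from testing at $\varphi_{1}$ for $\lambda\geq\overline{\lambda}=\|\varphi_{1}\|^{2}/\int_{\Omega}G_{+}(\varphi_{1})$, and positivity follows from the $v\mapsto v^{+}$ decomposition, elliptic regularity and the strong maximum principle; Proposition \ref{change} then transfers the solution back to \eqref{P2}. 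All steps check out (note that the Palais--Smale verification is not really needed: coercivity plus weak lower semicontinuity, exactly as in Lemma \ref{funct}$(ii)$, already gives a global minimizer). What each approach buys: the paper's construction yields a solution sandwiched between $\varphi_{1}^{r}$ and $Ke$, which is the ordering later exploited in Lemma \ref{este} to build the maximal solution and the monotone branch; yours yields in addition that the solution is a global minimizer with negative energy and an explicit threshold $\overline{\lambda}$, which dovetails with the two-solution structure in Theorem \ref{teor3}$(a)$. One small caveat on your closing remark: it is not true that a monotone-iteration argument ``cannot be started from below'' for large $\lambda$ --- the paper does exactly that with the subsolution $\varphi_{1}^{r}$, whose power $r>1$ is precisely what tames the behaviour of $g$ near $0$ at the boundary; the correct statement is only that such a subsolution is unavailable for small $\lambda$. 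This remark is tangential and does not affect the validity of your proof.
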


\begin{proof}
Since $\partial \varphi_{1}/\partial\eta<0$ on $\partial\Omega$, where $\eta$ is the outward unit normal vector on $\partial\Omega$, there exists a neighborhood $\Omega_{r}$ of $\partial\Omega$, for some $r>1$, such that
$$
(1-r)\varphi_{1}^{-2}|\nabla\varphi_{1}|^{2}+\lambda_{1}\leq 0 \ \mbox{in $\Omega_{r}$}.
$$
On the other hand, there exists $\overline{\lambda}>0$, such that
$$
r\frac{\varphi_{1}^{r}}{g(\varphi_{1}^{r})}\left( (1-r)\varphi_{1}^{-2}|\nabla\varphi_{1}|^{2}+\lambda_{1}\right)\leq \lambda \ \mbox{in $\Omega\backslash\overline{\Omega}_{r}$},
$$
for all $\lambda\geq \overline{\lambda}$. Showing that
$$
-\Delta(\varphi_{1}^{r})=r\varphi_{1}^{r}\left(\lambda_{1}+(1-r)\varphi_{1}^{-2}|\nabla\varphi_{1}|^{2}\right)\leq\lambda g(\varphi_{1}^{r}) \ \mbox{in $\Omega$}.
$$
Consequently $\underline{v}=\varphi_{1}^{r}$, for some $r>1$, is a sub-solution of \eqref{P2'}, if $\lambda\geq \overline{\lambda}$. On the other hand, it is a straightforward consequence of Lemma \ref{main}$(iv)$ that, for $K$ large enough, $\overline{v}=Ke$ is a super solution of \eqref{P2'}. Moreover, if appropriate, we can choose $K$ greater yet to ensure that $\underline{v}\leq \overline{v}$.

\end{proof}

\begin{lemma}\label{este}
Suppose $(\vartheta_{1})-(\vartheta_{3})$ hold and $q\in (1, 3)$. Then, there exists $\lambda_{\ast}>0$ such that \eqref{P2} has a positive solution  if, and only if, $\lambda\geq \lambda_{\ast}$. Moreover, there exists a maximal solution $\xi_{\lambda}$, for $\lambda\geq\lambda_{\ast}$, such that if $\nu>\mu\geq \lambda_{\ast}$, we have $\xi_{\lambda_{\ast}}\leq \xi_{\mu}<\xi_{\nu}$.
\end{lemma}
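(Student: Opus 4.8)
The plan is to work throughout with the dual problem \eqref{P2'}: let $\Lambda$ be the set of $\lambda>0$ for which \eqref{P2'} has a positive solution. By Proposition \ref{change}, $\Lambda$ is exactly the set of $\lambda>0$ for which \eqref{P2} has a positive solution, and since $f$ is an increasing $C^{2}$-diffeomorphism (Proposition \ref{pp2}$(i)$), the map $v\mapsto f(v)$ transports any order-monotone family of solutions of \eqref{P2'} to one for \eqref{P2}. Hence it suffices to show that $\Lambda=[\lambda_{\ast},\infty)$ for some $\lambda_{\ast}>0$ and to construct, for each $\lambda\geq\lambda_{\ast}$, the maximal positive solution $\xi_{\lambda}$ of \eqref{P2'} with the stated monotonicity.

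First I would check that $\Lambda$ is a half-line unbounded to the right with positive infimum. By Lemma \ref{exis1}, $[\overline{\lambda},\infty)\subseteq\Lambda$, so $\Lambda\neq\emptyset$. If $\lambda_{0}\in\Lambda$ with positive solution $v_{0}$ and $\lambda>\lambda_{0}$, then $-\Delta v_{0}=\lambda_{0}g(v_{0})\leq\lambda g(v_{0})$ since $g\geq 0$ on $[0,\infty)$, so $v_{0}$ is a subsolution of \eqref{P2'} at $\lambda$; arguing as in the proof of Theorem \ref{teor1}$(a)$, $\overline{v}=Ke$ is a supersolution for $K$ large (Lemma \ref{main}$(iv)$) and, enlarging $K$, $v_{0}\leq\overline{v}$, so the sub-supersolution method gives a positive solution in $[v_{0},\overline{v}]$ and $\lambda\in\Lambda$. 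For the positive lower bound, note that by Lemma \ref{main}$(iii)$--$(iv)$ the continuous positive function $s\mapsto g(s)/s$ on $(0,\infty)$ tends to $0$ as $s\to 0^{+}$ and as $s\to\infty$, so $M:=\sup_{s>0}g(s)/s$ is finite; testing $-\Delta v_{\lambda}=\lambda g(v_{\lambda})$ against $\varphi_{1}$ and using $g(v_{\lambda})\leq M v_{\lambda}$ gives $\lambda_{1}\int_{\Omega}v_{\lambda}\varphi_{1}\,dx\leq\lambda M\int_{\Omega}v_{\lambda}\varphi_{1}\,dx$, whence $\lambda\geq\lambda_{1}/M$ for every $\lambda\in\Lambda$. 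Thus $\lambda_{\ast}:=\inf\Lambda\geq\lambda_{1}/M>0$.

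The remaining point, and the one I expect to be the main obstacle, is to show $\lambda_{\ast}\in\Lambda$. Take $\lambda_{n}\downarrow\lambda_{\ast}$ with $\lambda_{n}\leq\lambda_{\ast}+1$ and positive solutions $v_{n}$ of \eqref{P2'} at $\lambda_{n}$. Since the constant $C$ in \eqref{add2} is increasing in $\lambda$, Lemma \ref{bounde} gives $v_{n}\leq C(\lambda_{\ast}+1)\psi$, so $\{v_{n}\}$ is bounded in $L^{\infty}(\Omega)$; then $\{\lambda_{n}g(v_{n})\}$ is bounded in $L^{\infty}(\Omega)$ and a standard bootstrap with $L^{p}$- and Schauder estimates bounds $\{v_{n}\}$ in $C^{2,\alpha}(\overline{\Omega})$, so along a subsequence $v_{n}\to v_{\ast}$ in $C^{2}(\overline{\Omega})$, with $v_{\ast}\geq 0$ solving \eqref{P2'} at $\lambda_{\ast}$. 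If $v_{\ast}\equiv 0$ then $|v_{n}|_{\infty}\to 0$, and by Lemma \ref{main}$(iii)$ the number $\varepsilon_{n}:=\sup_{0<s\leq|v_{n}|_{\infty}}g(s)/s$ tends to $0$; testing against $\varphi_{1}$ as above yields $\lambda_{1}\int_{\Omega}v_{n}\varphi_{1}\,dx\leq\lambda_{n}\varepsilon_{n}\int_{\Omega}v_{n}\varphi_{1}\,dx$, i.e. $\lambda_{1}\leq\lambda_{n}\varepsilon_{n}\to 0$, a contradiction. Hence $v_{\ast}\not\equiv 0$, so $v_{\ast}>0$ in $\Omega$ by the strong maximum principle, $\lambda_{\ast}\in\Lambda$ and $\Lambda=[\lambda_{\ast},\infty)$.

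Finally, for fixed $\lambda\geq\lambda_{\ast}$ I would obtain $\xi_{\lambda}$ by monotone iteration from above. Choose $K=K(\lambda)$ so large that $\overline{v}=Ke$ is a supersolution of \eqref{P2'} and $\overline{v}\geq C(\lambda)\psi$; then by Lemma \ref{bounde}, $\overline{v}$ lies above every positive solution of \eqref{P2'} at $\lambda$. Since $g$ is locally Lipschitz, fix $m>0$ with $s\mapsto\lambda g(s)+ms$ nondecreasing on $[0,|\overline{v}|_{\infty}]$ and set $w_{0}=\overline{v}$, $(-\Delta+m)w_{j+1}=\lambda g(w_{j})+m w_{j}$ in $\Omega$, $w_{j+1}=0$ on $\partial\Omega$; then $\{w_{j}\}$ is nonincreasing and converges in $C^{2}(\overline{\Omega})$ to a solution $\xi_{\lambda}$ of \eqref{P2'} that dominates every positive solution below $\overline{v}$, hence every positive solution, and since $\Lambda=[\lambda_{\ast},\infty)$ there is at least one, so $\xi_{\lambda}>0$ and is the maximal positive solution. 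For the monotonicity, if $\nu>\mu\geq\lambda_{\ast}$ then $-\Delta\xi_{\mu}=\mu g(\xi_{\mu})<\nu g(\xi_{\mu})$ in $\Omega$ (strict since $\xi_{\mu}>0$ and $g>0$ on $(0,\infty)$), so $\xi_{\mu}$ is a subsolution of \eqref{P2'} at $\nu$; running the iteration at $\nu$ from $Ke\geq\xi_{\mu}$ stays above $\xi_{\mu}$, so $\xi_{\nu}\geq\xi_{\mu}$. Writing $w=\xi_{\nu}-\xi_{\mu}\geq 0$, $w=0$ on $\partial\Omega$, and letting $L$ be a Lipschitz constant for $g$ on the range of $\xi_{\mu}$ and $\xi_{\nu}$, one has $(-\Delta+\nu L)w=\nu\big(g(\xi_{\nu})-g(\xi_{\mu})+Lw\big)+(\nu-\mu)g(\xi_{\mu})\geq(\nu-\mu)g(\xi_{\mu})>0$ in $\Omega$, so the strong maximum principle forces $w>0$ in $\Omega$, i.e. $\xi_{\mu}<\xi_{\nu}$; taking $\mu=\lambda_{\ast}$ gives $\xi_{\lambda_{\ast}}\leq\xi_{\mu}$ for all $\mu\geq\lambda_{\ast}$. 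Transporting everything through $f$ (Proposition \ref{pp2}$(i)$) gives the corresponding statements for \eqref{P2}; the elliptic bootstrap used above is routine.
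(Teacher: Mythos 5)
Your proposal is correct and follows essentially the same route as the paper: sub-/supersolutions with $Ke$ to show the solution set is an upward half-line, the a priori bound of Lemma \ref{bounde} plus elliptic compactness and Lemma \ref{main}$(iii)$ to get a solution at $\lambda_{\ast}$, and a monotone iteration from a dominating supersolution to produce the maximal solution and its monotonicity in $\lambda$. In fact your write-up is slightly more complete than the paper's: you give an explicit argument that $\lambda_{\ast}>0$ (via $\sup_{s>0}g(s)/s<\infty$ and testing with $\varphi_{1}$) and you prove the strict ordering $\xi_{\mu}<\xi_{\nu}$ in the direction actually claimed in the statement, whereas the paper's final comparison appears with the roles of $\mu$ and $\nu$ interchanged.
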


\begin{proof}
It follows from Lemma \ref{exis1} that $\Gamma=\{\lambda>0: \eqref{P2'}  \ \mbox{has a positive solution}\}\neq \emptyset$ and $\Gamma\subset (0, \infty)$. Let $\lambda_{\ast}:=\inf\Gamma$. If $\lambda>\lambda_{\ast}$, it is easy to see that, for any fixed $\lambda_{\ast}\leq\mu<\lambda$, the functions $\underline{v}=v_{\mu}$ and $\overline{v}=Ke$, for $K$ large enough, are ordered sub and super-solutions, respectively, for the problem \eqref{P2'}, where $v_{\mu}$ denotes a solution of \eqref{P2'} with $\lambda=\mu$.

When $\lambda=\lambda_{\ast}$, we take a sequence $\{\lambda_{n}\}\subset \Gamma$ such that $\lambda_{n}\downarrow\lambda_{\ast}$. Denote by $v_{n}$ a positive solution of \eqref{P2'} with $\lambda=\lambda_{n}$. By Lemma \ref{bounde}, it follows that $\{v_{n}\}\subset L^{\infty}(\Omega)$ and, by elliptic regularity, passing to a subsequence, $v_{n}\to v_{\ast}$ in $C^{2}(\overline{\Omega})$, where $v_{\ast}$ is a solution of \eqref{P2'} with $\lambda=\lambda_{\ast}$. Observe that $v_{\ast}\neq 0$ because, otherwise, we have $v_{n}\to 0$ in $C^{2}(\overline{\Omega})$. Since $q\in (1, 3)$, it follows from Proposition \ref{main}$(ii)$ that
$$
0=\lambda_{1}\left(-\Delta-\lambda_{n}\frac{g(v_{n})}{v_{n}}\right)\to\lambda_{1}.
$$
Last equality leads us to a contradiction.

To prove the existence of a maximal solution $\xi_{\lambda}$ for the problem \eqref{P2'}, observe that, by Lemma \ref{bounde}, if $v$ is a positive solution of $\eqref{P2'}$ then 
$$
v\leq C\psi,
$$
where $C$ is defined in \eqref{add2}, and $\psi$ is the unique solution of the problem 
\begin{equation}
\left \{ \begin{array}{ll}
-\Delta w=w^{(q-1)/2} & \mbox{in $\Omega$,}\\
w>0 & \mbox{in $\Omega$,}\\
w=0 & \mbox{on $\partial\Omega$.}
\end{array}\right.
\end{equation}

Now, taking $K>0$ such that the map $\gamma(s)=\lambda g(s)+Ks$ is increasing in $[0, C|\psi|_{\infty}]$ and considering the monotonic iteration 
$$
-\Delta v_{n+1}+K v_{n+1}=\gamma(v_{n}), v_{0}=\psi, v_{n+1}=0 \ \mbox{on $\partial\Omega$},
$$
we get a maximal solution in $[0, C|\psi|_{\infty}]$. Since, any positive solution $w$ of \eqref{P2'} satisfies $w<C\psi$, the existence of the maximal solution follows.

Finally, by arguing as previously, it follows that if $\nu>\mu\geq\lambda_{\ast}$, then there exists a positive solution $v$ of \eqref{P2'}, such that $\xi_{\nu}<v\leq Ke$, where $\xi_{\nu}$ is the maximal solution of the problem \eqref{P2'} with $\lambda=\nu$ and $Ke$ is a super solution of \eqref{P2'}, with $\lambda=\mu$. Consequently,
$$
\xi_{\nu}<v\leq \xi_{\mu},
$$
this completes the proof.

\end{proof}

The next lemma provides us some informations about the energy functional $I$ associated to the problem \eqref{P2'}.

\begin{lemma}\label{funct}
Suppose the function $\vartheta$ satisfies $(\vartheta_{1})-(\vartheta_{3})$ and $q\in (1, 3)$. Then, the following claims hold:
\begin{enumerate}
\item[$(i)$] $I$ is well defined, coercive and bounded from below;

\item[$(ii)$] there exists a subsequence, which we denote yet by $\{v_{n}\}$, such that $v_{n}\rightharpoonup v_{0}$ in $H_{0}^{1}(\Omega)$ and $I(v_{0})\leq \liminf_{n\to\infty}I(v_{n})$, whenever $\{I(v_{n})\}$ is bounded;

\item[$(iii)$] the origin of $H_{0}^{1}(\Omega)$ is a local minimum of $I$;
\end{enumerate}
\end{lemma}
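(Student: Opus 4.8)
The plan is to establish the three properties of the energy functional
$$
I(v)=\frac{1}{2}\|v\|^{2}-\lambda\int_{\Omega}G(v)\,dx,\qquad G(s)=\frac{1}{q+1}|f(s)|^{q+1},
$$
in the order listed, since each item feeds into the next. For $(i)$, the key input is the growth control on $f$: by Proposition \ref{pp2}$(v)$--$(vi)$ we have $|f(s)|\le(8/\alpha^{2})^{1/4}\sqrt{|s|}$ for all $s\in\R$, hence $G(v)\le C|v|^{(q+1)/2}$ pointwise. Since $q\in(1,3)$, we have $(q+1)/2<2<2^{\ast}$, so the Sobolev embedding $H^{1}_{0}(\Omega)\hookrightarrow L^{(q+1)/2}(\Omega)$ gives
$$
I(v)\ge \frac{1}{2}\|v\|^{2}-C\lambda\|v\|^{(q+1)/2},
$$
and since the exponent $(q+1)/2<2$, the right-hand side tends to $+\infty$ as $\|v\|\to\infty$; this yields both well-definedness (the integral is finite) and coercivity, and boundedness from below is immediate from coercivity together with continuity of $I$ on the ball.

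For $(ii)$, given $\{v_{n}\}$ with $\{I(v_{n})\}$ bounded, coercivity from $(i)$ forces $\{\|v_{n}\|\}$ bounded, so up to a subsequence $v_{n}\rightharpoonup v_{0}$ in $H^{1}_{0}(\Omega)$ and, by Rellich--Kondrachov, $v_{n}\to v_{0}$ strongly in $L^{s}(\Omega)$ for every $s\in[1,2^{\ast})$, in particular for $s=(q+1)/2$. Weak lower semicontinuity of $v\mapsto\|v\|^{2}$ handles the quadratic term, while the strong $L^{(q+1)/2}$ convergence, combined with the continuity and the growth bound on $G$, gives $\int_{\Omega}G(v_{n})\,dx\to\int_{\Omega}G(v_{0})\,dx$ (dominated convergence along a further subsequence, using a convergent majorant from the strong convergence). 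Hence $I(v_{0})\le\liminf_{n}I(v_{n})$; this is the standard argument and presents no real obstacle.

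For $(iii)$, I would show $I(v)\ge 0=I(0)$ for all $v$ in a small ball of $H^{1}_{0}(\Omega)$. Using again $G(v)\le C|v|^{(q+1)/2}$ and the Sobolev embedding,
$$
I(v)\ge\frac{1}{2}\|v\|^{2}-C\lambda\|v\|^{(q+1)/2}=\|v\|^{(q+1)/2}\left(\frac{1}{2}\|v\|^{2-(q+1)/2}-C\lambda\right),
$$
and since $2-(q+1)/2>0$, the bracket is nonnegative once $\|v\|$ is small enough; thus $I(v)\ge 0=I(0)$ on a neighbourhood of the origin, so $0$ is a local minimum. The only point requiring a little care is that this local minimum is not necessarily strict (indeed one does not expect it to be, since for $q>1$ one has $g(s)/s\to 0$ as $s\to 0$ by Lemma \ref{main}$(iii)$, so $0$ is a degenerate critical point), but the statement only asserts it is a local minimum, which the above inequality delivers. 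I anticipate the only genuinely delicate step is justifying the passage to the limit $\int G(v_{n})\to\int G(v_{0})$ in $(ii)$ rigorously — i.e. upgrading strong $L^{(q+1)/2}$ convergence to convergence of the nonlinear integral — but this is routine via a dominated-convergence argument on a subsequence with an $L^{(q+1)/2}$ dominating function.
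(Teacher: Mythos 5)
Your items $(i)$ and $(ii)$ are essentially right and run parallel to the paper: the bound $|f(s)|\le (8/\alpha^{2})^{1/4}\sqrt{|s|}$ from Proposition \ref{pp2}$(v)$--$(vi)$ gives $G(s)\le C|s|^{(q+1)/2}$ with $(q+1)/2<2$, hence well-definedness, coercivity and boundedness from below; and $(ii)$ then follows from boundedness of $\{v_{n}\}$, weak compactness, Rellich--Kondrachov, weak lower semicontinuity of the norm, and convergence of the nonlinear integral (the paper does this last step by a mean-value argument, you by dominated convergence along a subsequence; both are routine and adequate here since a subsequence is all the statement asks for).

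Item $(iii)$, however, contains a genuine error. From $I(v)\ge\frac{1}{2}\|v\|^{2}-C\lambda\|v\|^{(q+1)/2}$ you factor out $\|v\|^{(q+1)/2}$ and assert that the bracket $\frac{1}{2}\|v\|^{2-(q+1)/2}-C\lambda$ is nonnegative for $\|v\|$ small. This is backwards: since $2-(q+1)/2>0$, the term $\|v\|^{2-(q+1)/2}$ tends to $0$ as $\|v\|\to 0$, so the bracket tends to $-C\lambda<0$ and your lower bound is negative in a punctured neighbourhood of the origin -- it is the estimate that yields coercivity at infinity, and it says nothing about local minimality at $0$. The point is that the global bound $G(s)\le C|s|^{(q+1)/2}$ is sharp only for large $|s|$; near $s=0$ one has $f(s)\sim s/\vartheta(0)^{1/2}$, hence $G(s)\sim c|s|^{q+1}$ with $q+1>2$, equivalently $g(s)/s\to 0$ as $s\to 0$ (Lemma \ref{main}$(iii)$), and this is exactly what the paper exploits: for every $\varepsilon>0$ there is $C_{\varepsilon}>0$ such that $G(s)\le\frac{\varepsilon}{2}s^{2}+C_{\varepsilon}|s|^{p+1}$ for some $p+1\in(2,2^{\ast})$, whence, using Poincar\'e and Sobolev,
$$
I(v)\ge\frac{1}{2}\Bigl(1-\frac{\varepsilon\lambda}{\lambda_{1}}\Bigr)\|v\|^{2}-\lambda C_{\varepsilon}\|v\|^{p+1},
$$
which is nonnegative (indeed positive for $v\neq 0$) on a small ball once $\varepsilon<\lambda_{1}/\lambda$, because now the subtracted power exceeds $2$. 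Replace your step $(iii)$ by this small-$s$ estimate; note in passing that it shows the origin is a strict local minimum, contrary to your parenthetical remark about degeneracy.
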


\begin{proof}

$(i)$ By Proposition \ref{pp2}$(vi)$, we conclude that
\begin{equation}\label{bound1}
|G(s)|\leq C_{1}|s|+C_{2}|s|^{(q+1)/2}, \ \forall \ s\in\R.
\end{equation}
Since $q\in (1, 3)$, then $(q+1)/2\in (1, 2)$. Showing that $I$ is well defined. On the other hand, the inequality \eqref{bound1} also implies that $I$ is coercive and bounded from below.


$(ii)$ Since $\{I(v_{n})\}$ is bounded and $I$ is coercive, it follows that $\{v_{n}\}$ is bounded in $\vartheta^{1}_{0}(\Omega)$. Consequently, there exists $v_{0}\in H^{1}_{0}(\Omega)$ such that 
$$
v_{n}\rightharpoonup v_{0} \ \mbox{in $\vartheta^{1}_{0}(\Omega)$}
$$
and
$$
v_{n}\to v_{0} \ \mbox{in $L^{s}(\Omega)$, with} \ s\in [1, 2^{\ast}).
$$
Since 
$$
I(v_{0})-I(v_{n})=\frac{1}{2}\left(\|v_{0}\|^{2}-\|v_{n}\|^{2}\right)+\int_{\Omega}[G(v_{n})-G(v_{0})]dx
$$
and $G(v_{n})-G(v_{0})=g(\varepsilon_{n})(v_{n}-v_{0})$ with $\varepsilon_{n}$ between $v_{n}$ and $v_{0}$, we conclude that
$$
\int_{\Omega}[G(v_{n})-G(v_{0})]dx\to 0
$$
and the result follows.

$(iii)$ It is sufficient to note that, by Lemma \ref{main}$(iii)$, for each $\varepsilon>0$, there exists $C_{\varepsilon}>0$ such that
$|g(s)|\leq\varepsilon(s^{2}/2)+C_{\varepsilon}|s|^{p}$, for some $p\in (2, 2^{\ast})$. Thus,
$$
I(v)\geq \frac{1}{2}\left(1-\frac{\varepsilon \lambda}{\lambda_{1}}\right)\|v\|^{2}-\lambda C_{\varepsilon}\|v\|^{p+1}.
$$
Showing that $0$ is a local minimum.

\end{proof}

Now, we are ready to prove the main result of existence of positive solutions in the case $q\in (1, 3)$.

\medskip

{\bf Proof of Theorem \ref{teor3}(a):}

\medskip

$(i)$ Let $\lambda_{\ast}$ be as in Lemma \ref{este}. Clearly, there is no positive solution for $\lambda\in (0, \lambda_{\ast})$. $(ii)$ On the other hand, by Lemma \ref{este}, we know that for $\lambda=\lambda_{\ast}$, there exists a maximal positive solution $\xi_{\ast}:=\xi_{\lambda_{\ast}}$ of \eqref{P2'}. $(iii)$ It follows from Lemma \ref{funct}$(i)-(ii)$ that, for each $\lambda> \lambda_{\ast}$, there exists $v_{\lambda}\geq \xi_{\ast}$ such that
$$
I(v_{\lambda})=\min_{v\in\mathfrak{M}}I(v),
$$
where
$$
\mathfrak{M}=\{I(v): v\in H^{1}_{0}(\Omega) \ \mbox{and} \ v\geq \xi_{\ast}\}.
$$
Since $\xi_{\ast}$ is a sub-solution of \eqref{P2'} with $\lambda>\lambda_{\ast}$, it follows from strong maximum principle that $v_{\lambda}-\xi_{\ast}\in int (\mathfrak{N})$, where $\mathfrak{N}=\{v\in C^{1}_{0}(\overline{\Omega}): v\geq 0 \ \mbox{in $\Omega$}\}$. Therefore, $v_{\lambda}$ is a solution of \eqref{P2'}. Consequently, for $\lambda>\lambda_{\ast}$, $I$ admits two different minima, i.e., $v_{\lambda}$ and $0$.

We are going to prove that there exists a third minimum $0<w_{\lambda}<v_{\lambda}$ for $I$. For this, consider the closed and convex set
$$
\mathfrak{V}=\{v\in H^{1}_{0}(\Omega): 0\leq v\leq v_{\lambda}\}.
$$
We mean by a critical point of $I$ in $\mathfrak{V}$ to any $v\in \mathfrak{V}$ satisfying
$$
l(v)=\sup\{I'(v)(w-v), w\in \mathfrak{V} \ \mbox{and $\|w-v\|\leq 1$}\}=0.
$$
Since $0\in \mathfrak{V}$, it follows that $l(v)=0$ implies $I'(v)=0$. Note that $I$ satisfies the $(PS)_{c}$ condition in $\mathfrak{V}$, for any $c\in\R$. Indeed, let $\{v_{n}\}\subset \mathfrak{V}$ with $I(v_{n})\to c$ and $l(v_{n})\to 0$. Since $\{I(v_{n})\}$ is bounded and $I$ is coercive, 
$\square$

Next theorem improves the result of nonexistence obtained in the previous theorem, as well as it tells us that the higher the size of $\lambda$ the more solutions has the problem \eqref{P2}. Before, however, we need to prove a technical lemma.

\begin{lemma}\label{tec}
Let $\mathcal{K}\subset H_{0}^{1}(\Omega)\backslash\{0\}$ be a compact set which is symmetric with regard the origin of $H_{0}^{1}(\Omega)$. Then, there exist $\beta>0$ and $s_{\mathcal{K}}>0$ such that 
\begin{equation}\label{defi}
s\mathcal{K}:=\{sv:v\in\mathcal{K}\}\subset \mathcal{A}_{\beta}:=\{v\in H_{0}^{1}(\Omega): |[|v|>1]|\geq\beta\}, \ \forall \ s\geq s_{\mathcal{K}}.
\end{equation}
\end{lemma}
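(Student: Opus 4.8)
The plan is to exploit compactness of $\mathcal{K}$ together with the fact that no element of $\mathcal{K}$ is the zero function. First I would observe that for a fixed $v\in H_0^1(\Omega)\setminus\{0\}$, the function $s\mapsto |[\,|sv|>1\,]| = |[\,|v|>1/s\,]|$ is nondecreasing in $s$ and, by the monotone convergence theorem applied to the sets $[\,|v|>1/s\,]\uparrow[\,|v|>0\,]$ as $s\to\infty$, it converges to $|[\,|v|>0\,]|$, which is strictly positive since $v\neq 0$. Hence for each individual $v$ there is an $s_v$ and a $\beta_v>0$ with $sv\in\mathcal{A}_{\beta_v}$ for all $s\geq s_v$. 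The point of the lemma is to make these uniform over $\mathcal{K}$.

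To get uniformity, I would first establish a local version: for each $v_0\in\mathcal{K}$ there is an open ball $B(v_0,\delta_{v_0})$ in $H_0^1(\Omega)$, a threshold $s_{v_0}>0$, and a constant $\beta_{v_0}>0$ such that $sw\in\mathcal{A}_{\beta_{v_0}}$ for every $w\in B(v_0,\delta_{v_0})$ and every $s\geq s_{v_0}$. The mechanism here is continuity of the map $w\mapsto \int_\Omega \min\{|w|,2\}^2\,dx$ (or any similar bounded truncation) from $H_0^1(\Omega)$ — via the embedding into $L^2(\Omega)$ — so that a quantity controlling $|[\,|w|>1\,]|$ from below stays bounded away from zero in a neighbourhood of $v_0$ once $s$ is large; concretely, if $sv_0$ already places mass $2\beta_{v_0}$ on the set $[\,|\cdot|>2\,]$, then a small $H_0^1$-perturbation keeps at least mass $\beta_{v_0}$ on $[\,|\cdot|>1\,]$, and this persists for all larger $s$ by monotonicity. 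Then, since $\mathcal{K}$ is compact, I would cover it by finitely many such balls $B(v_1,\delta_{v_1}),\dots,B(v_m,\delta_{v_m})$, and set $\beta=\min_i\beta_{v_i}>0$ and $s_{\mathcal{K}}=\max_i s_{v_i}>0$. For any $v\in\mathcal{K}$ there is an $i$ with $v\in B(v_i,\delta_{v_i})$, so $sv\in\mathcal{A}_{\beta_{v_i}}\subset\mathcal{A}_\beta$ for all $s\geq s_{\mathcal{K}}$, which is exactly \eqref{defi}. Symmetry of $\mathcal{K}$ is not really needed for the conclusion as stated, but it is harmless and consistent with how the lemma will be used.

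The main obstacle is the local uniformity step: one must choose the truncation level and the quantities carefully so that "largeness of $s$" and "closeness in $H_0^1$" interact in the right direction. The subtlety is that the measure $|[\,|w|>1\,]|$ is only lower semicontinuous, not continuous, under $L^2$-convergence, so I cannot perturb it directly; the fix is to pass to a continuous bounded surrogate (a truncated square, integrated) that dominates a multiple of the measure of a slightly smaller superlevel set $[\,|w|>2\,]$, use continuity there, and then trade the level $2$ back down to $1$ using the elementary inclusion $[\,|w|>2\,]\subset[\,|w|>1\,]$. Once that surrogate argument is set up, the finite-cover extraction is routine.
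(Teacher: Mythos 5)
Your proposal is correct, but it takes a genuinely different route from the paper's. The paper argues by contradiction and sequential compactness: it takes $v_{n}\in\mathcal{K}$ and $s_{n}\to\infty$ with $|[s_{n}|v_{n}|>1]|\to 0$, extracts a subsequence with $|v_{n}|\to|v|$ a.e.\ for some $v\in\mathcal{K}\setminus\{0\}$, produces a set $\hat{\Omega}_{0}$ of positive measure on which $|v_{n}|\geq\delta$ for large $n$, and notes that $\hat{\Omega}_{0}\subset[s_{n}|v_{n}|>1]$ eventually, contradicting the vanishing of the measure. You instead give a direct covering argument: at each $v_{0}\in\mathcal{K}$ you fix $s_{v_{0}}$ with $|[\,|s_{v_{0}}v_{0}|>2\,]|\geq 2\beta_{v_{0}}>0$, show that for $w$ in a small $H_{0}^{1}$-ball (radius chosen \emph{after} $s_{v_{0}}$, which is the point that makes the perturbation step legitimate) one still has $|[\,|s_{v_{0}}w|>1\,]|\geq\beta_{v_{0}}$, propagate to all $s\geq s_{v_{0}}$ by monotonicity of $s\mapsto|[\,|sw|>1\,]|$, and conclude by a finite subcover with $\beta=\min_{i}\beta_{v_{i}}$ and $s_{\mathcal{K}}=\max_{i}s_{v_{i}}$. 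Your version is a bit longer but constructive and quantitative (it exhibits $\beta$ and $s_{\mathcal{K}}$ explicitly and avoids a.e.-convergence/Egorov-type selections), whereas the paper's contradiction argument is shorter; your observation that the symmetry of $\mathcal{K}$ is not needed applies equally to the paper's proof. One small repair to your local step: the specific surrogate $w\mapsto\int_{\Omega}\min\{|w|,2\}^{2}dx$ does not control $|[\,|w|>1\,]|$ from below, since it does not vanish where $|w|\leq 1$; use instead, e.g., $\Phi(w)=\int_{\Omega}\min\{(|w|-1)^{+},1\}\,dx$, which satisfies $|[\,|w|>2\,]|\leq\Phi(w)\leq|[\,|w|>1\,]|$ and is Lipschitz on $L^{1}(\Omega)$, or argue directly by Chebyshev on the set $[\,|s_{v_{0}}v_{0}|>2\,]\cap[\,|s_{v_{0}}w|\leq 1\,]$, where $|w-v_{0}|\geq 1/s_{v_{0}}$; this is exactly the two-level scheme you describe, so the fix is cosmetic and does not affect the validity of your plan.
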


\begin{proof}
Indeed, otherwise, there exist $\{v_{n}\}\subset \mathcal{K}$ and $s_{n}\to\infty$ such that
\begin{equation}\label{conv0}
|[s_{n}|v_{n}|>1]|\to 0, \ \mbox{as $n\to\infty$}.
\end{equation}
Since $\mathcal{K}$ is compact, passing to a subsequence, there exists $v\in\mathcal{K}$ such that
$$
|v_{n}(x)|\to |v(x)| \ \mbox{a.e. in $\Omega$}.
$$
Since $v\neq 0$, there exist $\Omega_{0}\subset\Omega$ with positive measure and $\delta>0$ such that
$$
|v(x)|> \delta \ \mbox{in $\Omega_{0}$}.
$$
Consequently, there exist $n_{0}\in\N$ and  a subset $\hat{\Omega}_{0}\subset \Omega_{0}$ with positive measure, such that
$$
|v_{n}(x)|\geq\delta \ \mbox{in $\hat{\Omega}_{0}$}, \ \forall \ n\geq n_{0}.
$$
Thus, for $n$ large enough we have $\hat{\Omega}_{0}\subset [s_{n}|v_{n}|>1]$ and
$$
0<|\hat{\Omega}_{0}|\leq |[s_{n}|v_{n}|>1]|.
$$
The last inequality contradicts \eqref{conv0}. Showing that \eqref{defi} holds. 
\end{proof}

\medskip

{\bf Proof of Theorem \ref{teor3}(b):}

\medskip

It is clear that $I(0)=0$, $I$ is even and $C^1$. Moreover, by Lemma \ref{funct}$(i)$ we know that $I$ is coercive and bounded from below. On the other hand, by arguing as in case $q\in (0, 1)$ we can ensure that $I$ satisfies the Palais-Smale condition. 

\medskip

Thus, for each $k\in\N$, let $X_{k}$ be a $k$-dimensional subspace of $H_{0}^{1}(\Omega)$. Let also $S_{1}:=\{v\in H_{0}^{1}(\Omega):\|v\|=1\}$. Since $S_{1}\cap X_{k}$ is compact, by Lemma \ref{tec}, there exist $s_{k}>0$ and $\beta_{k}>0$ such that
$$
S_{s_{k}}\cap X_{k}=(s_{k}S_{1})\cap X_{k}\subset \mathcal{A}_{k}:=\{v\in X_{k}: |[|v|>1]|\geq\beta_{k}\}. 
$$

Finally, by Proposition \ref{pp2}$(v)$, we get
\begin{equation}\label{count}
|f(s)|\geq f(1)\sqrt{|s|}, \ \forall \ |s|> 1.
\end{equation}
Therefore, by \eqref{count}
\begin{eqnarray*}
I(s_{k}v)&\leq& \frac{1}{2}s_{k}^{2}-\frac{\lambda f(1)^{(q+1)/2}}{q+1}\int_{[|s_{k}v|>1]}|s_{k}v|^{(q+1)/2}dx\\
&\leq & \frac{1}{2}s_{k}^{2}-\frac{\lambda f(1)^{(q+1)/2}}{q+1}|[|s_{k}v|>1]|\\
&\leq &  \frac{1}{2}s_{k}^{2}-\frac{\lambda f(1)^{(q+1)/2}}{q+1}\beta_{k}, \ \forall \ v\in S_{1}\cap X_{k}.\\
\end{eqnarray*}
Showing that
$$
\sup_{w\in S_{s_{k}}\cap X_{k}}I(w)<0,
$$
whenever $\lambda> \lambda_{k}:=(q+1)s_{k}^{2}/2f(1)^{(q+1)/2}\beta_{k}$. By the classical Clark Theorem in \cite{Cla}, it follows that \eqref{P2'} has at least $k$ pairs of nontrivial solution with negative energy.
$\square$

%
%


\subsection{Case $q=3$}.\\

{\bf Proof of Theorem \ref{teor4}:}

\medskip

It follows from items $(v)$ and $(viii)$ of Lemma \ref{main} that
$$
\frac{g(s)}{s}<\frac{4}{\alpha^{2}}, \ \forall \ s>0.
$$
Thus, if $v_{\lambda}$ is a positive solution of $\eqref{P2'}$, we obtain
$$
0=\lambda_{1}\left(-\Delta-\lambda\frac{g(v_{\lambda})}{v_{\lambda}}\right)>\lambda_{1}\left(-\Delta-\lambda\frac{4}{\alpha^{2}}\right)=\lambda_{1}-\lambda\frac{4}{\alpha^{2}}.
$$
Showing that, if there exists a positive solution of \eqref{P2}, then $\lambda>(\alpha^{2}/4)\lambda_{1}$. 

In this Theorem and in the next one, we are going to apply now the bifurcation method. For this, let $e$ denote the unique positive solution of (\ref{prob}) in $\Omega$
and let $E$ be the Banach space consisting of all $u \in C(\overline{\Omega})$ for which there exists $\gamma = \gamma(u)>0$ such that
$-\gamma e < u < \gamma e $
endowed with the norm
$$\|u\|_E:= \mbox{inf}\{\gamma>0;~ -\gamma e< u < \gamma e\}$$ and the natural point-wise order. Then, $E$ is an ordered Banach space whose positive cone, say $P$, is normal and has nonempty interior. Moreover, $E\hookrightarrow C(\overline\Omega)$.

On the other hand, by Lemma \ref{main}$(v)$ we have that
$$
\lim_{s\to +\infty}\frac{g(s)}{s}=\frac{\alpha^{2}}{4}.
$$ 
Hence, we can apply Theorem 7.1.3 of \cite{julian}, see also Theorem A of \cite{ah}, and conclude that  
from $\lambda=(\alpha^{2}/4)\lambda_{1}$ emanates from infinity an unbounded continuum $\mathcal{C}\subset\R\times E$ of positive solutions and $\lambda=(\alpha^{2}/4)\lambda_{1}$ is the unique bifurcation point from infinity. We are going to show that $\mathcal{C}\cap(\R\times\{0\})=\emptyset$. In fact, otherwise, there exists a couple of sequences $\lambda_{n}\to\lambda_{\ast}\in (0,\infty)$ and $|v_{n}|_{\infty}\to0$ where $v_{n}$ is a positive solution of \eqref{P2'} with $\lambda=\lambda_{n}$. By Lemma \ref{main}$(iii)$, for all $\varepsilon>0$ there exists $n_{0}\in\N$ such that
$$
\frac{g(v_{n})}{v_{n}}<\varepsilon, \ \forall \ n\geq n_{0}.
$$
Consequently,
$$
0=\lambda_{1}\left(-\Delta-\lambda_{n}\frac{g(v_{n})}{v_{n}}\right)>\lambda_{1}-\lambda_{n}\varepsilon.
$$
Thus, $\lambda_{\ast}\varepsilon\geq\lambda_{1}$ for all positive $\varepsilon$, which leads us to a contradiction. Therefore, $Proj_{\R}(\mathcal{C})=((\alpha^{2}/4)\lambda_{1}, \infty)$, where $Proj_{\R}(\mathcal{C})$ denotes the projection of $\mathcal{C}$ on $\R$.
$\square$

\medskip

\subsection{Case $3<q<22^{\ast}-1$}.\\

{\bf Proof of Theorem \ref{teor5}(a):}

\medskip

Fix $\lambda>0$. We first prove that the problem
\begin{equation}\label{isso}
\left \{ \begin{array}{ll}
-\Delta w=\mu w+\lambda g(w) & \mbox{in $\Omega$,}\\
w>0 & \mbox{in $\Omega$,}\\
w=0 & \mbox{on $\partial\Omega$.}
\end{array}\right.
\end{equation}
has a positive solution if, and only if, $\mu\in(-\infty, \lambda_{1})$. Indeed, if $v$ is a positive solution of the previous problem, then
$$
\mu=\lambda_{1}\left(-\Delta-\lambda\frac{g(v)}{v}\right)<\lambda_{1}.
$$

On the other hand, by using the Lemma \ref{main}$(iii)$ we get tha
$$
\lim_{s\to 0}\frac{\l g(s)}{s}=0,
$$
and from Theorem 7.1.3 in \cite{julian} we conclude that from $\mu=\lambda_{1}$ emanates, from $w=0$, an unbounded continuum $\mathcal{C}\subset\R\times E$ of solutions of \eqref{isso}. Moreover, since $q\in (2, 22^{\ast}-1)$, then 
\begin{equation}\label{nois}
1<\frac{q-1}{2}<\frac{N+2}{N-2}.
\end{equation}
and, by Proposition \ref{pp2}$(vi)$, we get
$$
\lim_{s\to\infty}\frac{g(s)}{s^{(q-1)/2}}=\lim_{s\to\infty}\left(\frac{f(s)}{\sqrt{s}}\right)^{q-1}\times\lim_{s\to\infty}\sqrt{\frac{1}{\vartheta(f(s))/f(s)^{2}}}=\left(\frac{8}{\alpha^{2}}\right)^{(q-1)/4}\frac{\sqrt{2}}{\alpha},
$$
it follows from \cite{GS} that, for $\mu\in K$, $K\subset\R$ compact, $\{|v_{\mu}|_{\infty}\}$ is bounded, and by elliptic regularity, $\{v_\m\}$ is also bounded in $E$. Therefore, $Proj_{\R}(\mathcal{C})=(-\infty, \lambda_{1})$. Showing the claimed. Consequently, if $\mu=0$, there exists a positive solution of \eqref{P2} for all $\lambda>0$ and the result follows.

Now, we prove that $\lim_{\lambda\to 0}|u_\lambda |_\infty=\infty$. Assume that for a sequence $|u_{\lambda_n}|_\infty\leq C$. Then, by elliptic regularity, we conclude that $u_{\lambda_n}\to u_0\geq 0$ in $C^2(\overline\Omega)$ with $u_0$ a non-negative solution of  \eqref{P2} for $\lambda=0$. If $u_0$ is non-trivial, we arrive at a contradiction. If $u_0\equiv 0$, then we consider
$$
v_{\lambda_n}=\frac{u_{\lambda_n}}{|u_{\lambda_n}|_\infty}.
$$
Hence, $|v_{\lambda_n}|_\infty=1$, and by a similar argument to the above one, we conclude that $v_{\lambda_n}\to v_0>0$ in $C^2(\overline\Omega)$ and $v_0$ solution of \eqref{P2} for $\lambda=0$, a contradiction.
$\square$

\medskip

{\bf Proof of Theorem \ref{teor5}(b):}

\medskip

The proof is based in the symmetric mountain pass lemma in \cite{Rab}. For some $w\in H_{0}^{1}(\Omega)$, with $\|w\|=1$, we can split $H_{0}^{1}(\Omega)$ in the following way $H_{0}^{1}(\Omega)=X\oplus Span\{w\}$, where $X$ is the orthogonal complement of $w$. It follows from Proposition \ref{pp2}$(iv)$ and Sobolev embeddings, that
$$
I(sv)=\frac{1}{2}s^{2}-\frac{\lambda}{q+1}\int_{\Omega}|f(sv)|^{q+1}dx\geq  \frac{1}{2}s^{2}-\lambda C|s|^{q+1},
$$
for all $v\in X$ with $\|v\|=1$ and some positive $C$. Since $q\in (3, 22^{\ast}-1)$, there exist positive constants $\rho$ and $\alpha$ such that 
$$
I(\rho v)\geq \alpha, \ \forall \ v\in X \ \mbox{with $\|v\|=1$}.
$$

Now we are going to prove that for each $k$-dimensional subspace $\varphi_{k}$ of $H_{0}^{1}(\Omega)$, with $k>1$, there exist $\gamma_{k}>0$ and $r_{k}>0$ such that
\begin{equation}\label{neg}
I(v)\leq 0, \ \forall \ v\in \varphi_{k}\backslash B_{r_{k}}(0).
\end{equation}
For this, it is sufficient to note that, by Lemma \ref{tec}, there exist $\beta_{k}>0$ and $r_{k}>0$ such that
$$
\varphi_{k}\backslash B_{r_{k}}(0)\subset \ \mathcal{A}_{k}:=\mathcal{A}_{\beta_k}.
$$
Thus, by \eqref{count}
$$
I(v)\leq \frac{1}{2}\|v\|^{2}-\frac{\lambda}{q+1}\int_{[v>1]}|f(v)|^{q+1}dx\leq \frac{1}{2}\|v\|^{2}-\frac{\lambda f(1)^{q+1}}{q+1}\int_{[v>1]}|v|^{(q+1)/2}dx, 
$$
for all $v\in \varphi_{k}\backslash B_{r_{k}}(0)$. Hence,
\begin{eqnarray*}
I(v)&\leq& \frac{1}{2}\|v\|^{2}-\frac{\lambda f(1)^{q+1}}{q+1}\int_{\Omega}|v|^{(q+1)/2}dx+\frac{\lambda f(1)^{q+1}}{q+1}|\Omega|\\
&\leq& \frac{1}{2}\|v\|^{2}-\lambda C_{k}\|v\|^{(q+1)/2}+\lambda C,
\end{eqnarray*}
for all $v\in \varphi_{k}\backslash B_{r_{k}}(0)$. Since $q\in (3, 22^{\ast}-1)$, we can choose $r_{k}$ large enough in order to ensure that
$$
I(v)\leq 0, \ \forall \ v\in \varphi_{k}\backslash B_{r_{k}}(0).
$$
Finally, to show that $I$ satisfies the $(PS)_{c}$ condition, let $\{v_{n}\}\subset H_{0}^{1}(\Omega)$ such that
$$
I(v_{n})\to c \ \mbox{and} \ \|I'(v_{n})\|\to 0.
$$
Choosing $\varphi_{n}=f(v_{n})/f'(v_{n})$, we conclude from Proposition \ref{pp2}$(v)$ that
$$
|\varphi_{n}|\leq 2 |v_{n}| \ \mbox{and} \ |\nabla\varphi_{n}|=\left(1+\frac{\vartheta'(f(v_{n}))f(v_{n})}{2\vartheta(f(v_{n}))}\right)|\nabla v_{n}|. 
$$ 
Consequently, by $(\vartheta_{2})$,
\begin{equation}\label{eita!}
|\varphi_{n}|_{2}\leq 2|v_{n}|_{2} \ \mbox{and} \ \|\varphi_{n}\|\leq 2\|v_{n}\|,
\end{equation}
showing that $\varphi_{n}\in H_{0}^{1}(\Omega)$. Let us see now that $\{v_{n}\}$ is bounded. In fact, by \eqref{eita!}
$$
C_{1}+C_{2}\|v_{n}\|\geq I(v_{n})-\frac{1}{q+1}I'(v_{n})\varphi_{n}= \left[\frac{1}{2}-\frac{1}{q+1}\left(1+\frac{\vartheta'(f(v_{n}))f(v_{n})}{2\vartheta(f(v_{n}))}\right)\right]\|v_{n}\|^{2}.
$$
By using $(\vartheta_{2})$ again, we have
$$
C_{1}+C_{2}\|v_{n}\|\geq \left[\frac{1}{2}-\frac{2}{q+1}\right]\|v_{n}\|^{2}= \frac{(q-3)}{2(q+1)}\|v_{n}\|^{2}, \ \forall \ n\in\N.
$$
Therefore $\{\|v_{n}\|\}$ is bounded. To prove that $\{u_{n}\}$ has a convergent (in $H_{0}^{1}(\Omega)$) subsequence, it is sufficient to argue as in the proof of Theorem \ref{teor2}. The result follows now from the symmetric mountain pass lemma in \cite{Rab}.
$\square$

\subsection{Case $q\geq 22^{\ast}-1$}.\\

{\bf Proof of Theorem \ref{teor6}:}

\medskip

Define the function
$$
z(s)=\frac{(N-2)}{2}g(s)s-NG(s), \ \forall s\in\R.
$$
Observe that $z(0)=0$ and
$$
z'(s)=\frac{(N-2)}{2}g'(s)s-\frac{(N+2)}{2}g(s).
$$
Thus $z'(s)\geq 0$ ($s> 0$) if, and only if,
$$
\frac{g(s)}{g'(s)s}\leq \frac{N-2}{N+2}.
$$
Since,
\begin{eqnarray*}
\frac{g(s)}{g'(s)s}=\frac{2f(s)\vartheta(f(s))^{3/2}}{\left(2q\vartheta(f(s))-\vartheta'(f(s))f(s)\right)s}, \ \forall \ s>0,
\end{eqnarray*}
it follows from $(\vartheta_{1})-(\vartheta_{2})$ that
$$
\frac{g(s)}{g'(s)s}<\frac{2f(s)\vartheta(f(s))^{1/2}}{\left(2q-1\right)s}=\frac{2f(s)}{\left(2q-1\right)f'(s)s}.
$$
Finally, by Proposition \ref{pp2}$(v)$, we get
$$
\frac{g(s)}{g'(s)s}<\frac{2}{q-1}\leq \frac{N+2}{N-2},
$$
where the last inequality follows from $q\in [22^{\ast}-1, \infty)$. So, if there is a positive solution $u$ of \eqref{P2}, by Pohozaev inequality, we conclude that
$$
0\leq \int_{\Omega}\left(\frac{(N-2)}{2}g(u)u-NG(u)\right)dx< 0.
$$
A clear contradiction.
$\square$


\end{document}